\newtheorem{thm}{Theorem}[section]
\newtheorem{lem}[thm]{Lemma}
\newtheorem{prop}[thm]{Proposition}
\newtheorem{rem}[thm]{Remark}
\theoremstyle{definition}
\numberwithin{equation}{section}
\renewcommand{\Re}{\hbox{Re}\,}
\newcommand{\C}{\mathbb{C}}
\newcommand{\N}{\mathbb{N}}
\newcommand{\R}{\mathbb{R}}
\newcommand{\Z}{\mathbb{Z}}
\def\tilde{\widetilde}
\def \bfo {\begin {eqnarray*} }
\def \efo {\end {eqnarray*} }
\def \ba {\begin {eqnarray*} }
\def \ea {\end {eqnarray*} }
\def \beq {\begin {eqnarray}}
\def \eeq {\end {eqnarray}}
\def \det {\hbox{det}}
\def \p {\partial}
\def\tilde{\widetilde}
\def \bfo {\begin {eqnarray*} }
\def \efo {\end {eqnarray*} }
\def \ba {\begin {eqnarray*} }
\def \ea {\end {eqnarray*} }
\def \beq {\begin {eqnarray}}
\def \eeq {\end {eqnarray}}
\def \det {\hbox{det}}
\def \p {\partial}
\begin{document}

 \title[Absolute continuity of the periodic Schr\"odinger operator]{Absolute continuity of the periodic Schr\"odinger operator in transversal geometry}

\author[Krupchyk]{Katsiaryna Krupchyk}

\address
         {K. Krupchyk, Department of Mathematics\\
University of California, Irvine\\ 
CA 92697-3875, USA }

\email{katya.krupchyk@uci.edu}

\author[Uhlmann]{Gunther Uhlmann}

\address
       {G. Uhlmann, Department of Mathematics\\
       University of Washington\\
       Seattle, WA  98195-4350\\
       USA\\
       Department of Mathematics and Statistics \\
       University of Helsinki\\
         P.O. Box 68 \\
         FI-00014   Helsinki\\
         Finland}
\email{gunther@math.washington.edu}

\maketitle

\begin{abstract}
We show that the spectrum of a Schr\"odinger operator on $\R^n$, $n\ge 3$, with a periodic smooth Riemannian metric, whose  conformal multiple has a product structure with one Euclidean direction, and with a periodic electric potential in $L^{n/2}_{\text{loc}}(\R^n)$, is purely absolutely continuous. Previously known results in the case of a general metric are obtained in  \cite{Friedlander_2002}, see also  \cite{Filonov_ Tikhomirov},  under the assumption that the metric, as well as the potential, are reflection symmetric.

\end{abstract}

\section{Introduction}
Consider the Schr\"odinger operator,
\[
H=-\Delta_g+q\quad \text{on}\quad \R^n, \quad n\ge 3.
\]
Here $-\Delta_g$ is the Laplace--Beltrami operator, associated to a $C^\infty$ smooth  Riemannian metric $g$, given by 
\[
-\Delta_g=|g|^{-1/2}D_{x_j}(|g|^{1/2} g^{jk}D_{x_k}),
\]
where $D_{x_j}=i^{-1}\p_{x_j}$,  $(g^{jk})$ is the matrix inverse of $(g_{jk})$, and $|g|=\det(g_{jk})$. Throughout the paper we  shall assume that the metric $g$ and the electric potential $q$  are $2\pi$--periodic in all variables.

Let $q\in L^{n/2}_{\text{loc}}(\R^n)$ be real--valued.  Then  the operator $H$ is the self-adjoint operator on $L^2(\R^n; |g|^{1/2}dx)$, given via the closed semibounded from below sesquilinear form,
\begin{equation}
\label{eq_int_form}
h[u,v]=\int_{\R^n} g^{jk}D_{x_k} u \overline{D_{x_j} v}|g|^{1/2} dx+\int_{\R^n} qu \overline{v}|g|^{1/2}dx,
\end{equation}
with the domain $\mathcal{D}(h)=H^1(\R^n)$,  the standard $L^2$ based Sobolev space, see Appendix \ref{sec_1}.

Starting with the pioneering work \cite{Thomas_1973}, the structure of the spectrum of the periodic Schr\"odinger operator $H$ in $\R^n$ has been intensively studied.  We refer to  \cite{Reed_Simon_book_4}, \cite{Birman_Suslina_1998}, \cite{Birman_Suslina_1999},  \cite{Shen_2001}, and \cite{Shen_Zhao_2008} for  some of the works in this direction.  In particular, in the case of the Euclidean metric, i.e. $g=I$, we know that the spectrum of $H$ is purely absolutely continuous for a potential $q\in L^{n/2}_{\text{loc}}(\R^n)$ when $n\ge 3$,  thanks to the works \cite{Birman_Suslina_1999} and \cite{Shen_2001}, and for $q\in L^{1+\varepsilon}_{\text{loc}}(\R^2)$, $\varepsilon>0$,  thanks to the work \cite{Birman_Suslina_1998}.  These results can be extended to the case of a metric $g$ conformal to the Euclidean, i.e. $g=cI$,  where $c>0$ is a smooth periodic function.

The absolute continuity of the spectrum for the magnetic Schr\"odinger operator with periodic electric and magnetic potentials, in the case of the Euclidean metric, was established in \cite{Birman_Suslina_1997} in two dimensions and in \cite{Sobolev_1999} in the higher dimensions. See also \cite{Hempel_Herbst_1995}.

In two dimensions, the case of a general $C^\infty$--smooth metric $g$ was investigated completely in \cite{Morame_1998} and the absolute continuity of the spectrum of a periodic magnetic Schr\"odinger operator was established in \cite{Morame_1998}.  

In higher dimensions, the case of  a general  metric is wide open and the only result concerning the absolute continuity of the spectrum of a periodic magnetic Schr\"odinger operator that we are aware of is due to \cite{Friedlander_2002}, under the assumption that the operator is invariant under reflection $x_1\mapsto -x_1$, in the case of smooth coefficients.  The smoothness assumptions in the result of \cite{Friedlander_2002} were relaxed in \cite{Filonov_ Tikhomirov}, and the absolute continuity of the spectrum was obtained for a Lipschitz continuous metric $g$, the magnetic potential $A\in L^{n+\varepsilon}_{\text{loc}}(\R^n)$, $\varepsilon>0$, and the electric potential $q\in L^{n/2}_{\text{loc}}(\R^n)$, 
see also \cite{Filonov_Sobolev_2013}.

The purpose of this paper is to consider the case $n\ge 3$ and to show the absolute continuity of the spectrum of the periodic Schr\"odinger operator $H$ with a Riemannian metric $g$, whose conformal multiple has a product structure with one Euclidean direction,
and $q\in L^{n/2}_{\text{loc}}(\R^n)$. To be precise, we assume that  
\begin{equation}
\label{eq_metric_transv}
g(x_1,x')=c(x)\begin{pmatrix}
1 & 0\\
0& g_0(x')
\end{pmatrix},
\end{equation}
where $c>0$ is a positive smooth function, $x=(x_1,x')\in \R^n$, and $g_0$ is a Riemannian metric on $\R^{n-1}$.  

Our main result is as follows. 
\begin{thm}
\label{thm_main}
Let  $g$ be a  $C^\infty$--smooth Riemannian metric on $\R^n$, $n\ge 3$,  of the form \eqref{eq_metric_transv}, and let  $q\in L^{n/2}_{\emph{\text{loc}}}(\R^n; \C)$.  Assume that $g$ and $q$ are periodic with respect to the lattice $2\pi\Z^n$.  Then  the Schr\"odinger operator $H=-\Delta_g+q$ in $L^2(\R^n; |g|^{1/2}dx)$, defined by the sesquilinear form \eqref{eq_int_form}, has no eigenvalues. In the case when $q$ is  real--valued, the spectrum of $H$ is purely absolutely continuous. 
\end{thm}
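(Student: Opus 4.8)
The plan is to use the Thomas approach via Floquet--Bloch theory, combined with a complex shift of the quasimomentum in the Euclidean direction $x_1$. After a conformal change, the relevant operator decouples nicely: since $-\Delta_g = c^{-1}|g_0|^{-1/2}\bigl(D_{x_1}|g_0|^{1/2}D_{x_1} + D_{x'}(|g_0|^{1/2}g_0^{jk}D_{x'})\bigr)$ up to the conformal factor multiplying the potential, the absolute continuity question for $H=-\Delta_g+q$ on $L^2(\R^n;|g|^{1/2}dx)$ reduces to studying the operator $-\partial_{x_1}^2 + (-\Delta_{g_0}) + \widetilde q$ acting on $L^2(\R^n; |g_0|^{1/2}\,dx_1dx')$, where $\widetilde q = c\,q\in L^{n/2}_{\mathrm{loc}}$ is still periodic. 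The point of \eqref{eq_metric_transv} is precisely that the conformal multiple of $g$ has the $x_1$ direction flat, so that the Laplacian in that variable is the plain $-\partial_{x_1}^2$ with constant coefficient, which is what makes the Thomas argument go through without reflection symmetry.

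First I would set up the Floquet transform: decompose $L^2(\R^n)$ as a direct integral over the torus of quasimomenta $k=(k_1,k')\in [0,1)^n$, so that $H$ becomes a direct integral of operators $H(k)$ on the torus $\mathbb{T}^n = \R^n/2\pi\Z^n$, with $H(k) = (-\Delta_{g_0,k'}) + (D_{x_1}+k_1)^2 + \widetilde q$ on $L^2(\mathbb{T}^n;|g_0|^{1/2})$ (with appropriate periodic boundary conditions). It is a standard fact that purely absolutely continuous spectrum, and absence of eigenvalues, follows once one shows that for each real $\lambda$ the set of $k$ for which $\lambda \in \mathrm{spec}(H(k))$ has measure zero; equivalently, it suffices to show that no eigenvalue $\lambda$ of $H$ can be an eigenvalue of $H(k)$ on a positive-measure set of $k$. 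The classical route (Thomas, Birman--Suslina, Shen) is to fix $k'$ and analytically continue $k_1$ into the complex plane, $k_1 = a + i\tau$, and show that for $|\tau|$ large the operator $H(k_1,k') - \lambda$ is invertible; since $H(k)-\lambda$ is an analytic family of Fredholm operators of index zero in $k_1$, invertibility for one complex value forces invertibility for a.e. real value, which gives the result.

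The core estimate is therefore: for $\widetilde q=0$, show that $\|(H_0(a+i\tau,k')-\lambda)^{-1}\|$ stays bounded (or the operator norm of multiplication by $\widetilde q$ relative to it goes to $0$) as $\tau\to\infty$. With $\widetilde q=0$ the free operator separates, and on the eigenspaces of $-\Delta_{g_0,k'}$ with eigenvalue $\mu_j\ge 0$ one is left with the one-dimensional symbol $\xi_1$ shifted: $(\xi_1 + a + i\tau)^2 + \mu_j - \lambda$ on $\mathbb{Z}$ (Fourier side in $x_1$), whose modulus is bounded below by a quantity growing like $\tau$ uniformly in $\xi_1\in\mathbb{Z}$ and $\mu_j\ge0$, after choosing $a$ appropriately away from the lattice (e.g. $a=1/2$). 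This yields a resolvent bound $O(1/\tau)$ in $L^2$, and by the Sobolev-type gain (one derivative in $x_1$) together with a Kenig--Ruiz--Sogge / uniform Sobolev estimate adapted to the closed manifold $\mathbb{T}^{n-1}$ with metric $g_0$ in the transverse variables, one upgrades this to control of the $L^{2n/(n-2)} \leftarrow L^{2n/(n+2)}$ mapping norm that dominates multiplication by $\widetilde q\in L^{n/2}$. Here one uses Hölder to write $\|\widetilde q\, u\|_{L^{2n/(n+2)}} \le \|\widetilde q\|_{L^{n/2}}\|u\|_{L^{2n/(n-2)}}$. A density/approximation argument splits $\widetilde q = \widetilde q_1 + \widetilde q_2$ with $\widetilde q_1\in L^\infty$ and $\|\widetilde q_2\|_{L^{n/2}}$ small, handling the two pieces separately.

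The main obstacle is establishing the uniform-in-$\tau$ $L^{2n/(n+2)}\to L^{2n/(n-2)}$ bound for the free resolvent $(H_0(1/2+i\tau,k')-\lambda)^{-1}$ on the product $\R/2\pi\Z \times (\mathbb{T}^{n-1},g_0)$. Because the transverse factor is a general compact Riemannian manifold rather than Euclidean space, one cannot invoke the flat Kenig--Ruiz--Sogge estimate directly; instead one needs a uniform resolvent estimate for $-\Delta_{g_0}$ on $\mathbb{T}^{n-1}$ of Dos Santos Ferreira--Kenig--Salo type, combined with an integration in the $\xi_1$ (or $\tau$-shifted) variable, being careful that constants do not degenerate as the spectral parameter moves. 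This is where the product structure and the flatness of the $x_1$ direction are essential: they let the one-dimensional shifted symbol supply the ``gain'' that the Euclidean Stein--Tomas argument would otherwise provide, so the transverse manifold only needs the weaker (and known) uniform Sobolev bound. Once this estimate is in hand, the absence of eigenvalues in the complex-valued case, and the pure absolute continuity in the real-valued case, follow by the standard Fredholm-analyticity argument over the quasimomentum $k_1$.
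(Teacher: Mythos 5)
Your overall architecture coincides with the paper's: conformal reduction to a metric with a flat $x_1$ direction (note, though, that your identity $-\Delta_g=c^{-1}(\cdots)$ with $\tilde q=cq$ is not quite right for $n\ge 3$; one must conjugate by $c^{\pm(n\mp 2)/4}$, which produces an extra smooth periodic zeroth-order term $c^{\frac{n+2}{4}}(-\Delta_g)(c^{-\frac{n-2}{4}})$ --- harmless, but it is the conjugated identity that makes the reduction legitimate), then Floquet--Thomas with the complex quasimomentum $\theta_\tau=(\tfrac12+i\tau,0,\dots,0)$, an $L^2$ bound of size $|\tau|^{-1}$ for bounded potentials, and for $q\in L^{n/2}$ a uniform $L^{\frac{2n}{n+2}}\to L^{\frac{2n}{n-2}}$ bound for the shifted free operator combined with H\"older and the splitting $q=q^\sharp+(q-q^\sharp)$.

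The genuine gap is at the crucial analytic step, the uniform bound $\|G_\tau f\|_{L^{\frac{2n}{n-2}}(\mathbb{T}^n)}\lesssim\|f\|_{L^{\frac{2n}{n+2}}(\mathbb{T}^n)}$. You propose to get it from a Dos Santos Ferreira--Kenig--Salo type uniform resolvent estimate for the \emph{transverse} operator $-\Delta_{g_0}$ on $\mathbb{T}^{n-1}$, ``combined with an integration in the $\xi_1$ variable.'' As stated this does not work: the exponents you need are the $n$-dimensional conjugate pair $\frac{2n}{n+2},\frac{2n}{n-2}$ dictated by H\"older against $q\in L^{n/2}(\mathbb{T}^n)$, while a fiberwise estimate on the $(n-1)$-dimensional manifold produces the $(n-1)$-dimensional pair; summing over the $x_1$-Fourier modes does not by itself supply the missing Sobolev gain in the $x_1$ direction, and for $n=3$ the transverse manifold is two-dimensional, where no uniform resolvent estimate of the required endpoint form is available at all. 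What the paper does instead is to treat the full $n$-dimensional non-negative elliptic self-adjoint operator $H_0(\theta_0)=(D_{x_1}+\tfrac12)^2-\Delta_{g_0(x')}$ on $\mathbb{T}^n$: it uses Sogge--Seeger spectral cluster estimates for $H_0(\theta_0)$ to prove the $L^{\frac{2n}{n+2}}\to L^2$ bound of size $|\tau|^{-1/2}$, and then, via a Littlewood--Paley decomposition in the $x_1$-frequency, compares $G_\tau$ on each dyadic block with the true resolvent $(H_0(\theta_0)-\zeta)^{-1}$ at $\zeta=\tau^2-i(2^\nu+1)\tau$ (whose $\sqrt{\zeta}$ has imaginary part bounded below), controlling the difference again by cluster estimates and invoking the uniform $L^{\frac{2n}{n+2}}\to L^{\frac{2n}{n-2}}$ resolvent estimates for variable-coefficient elliptic operators on compact $n$-manifolds (Dos Santos Ferreira--Kenig--Salo, Bourgain--Shao--Sogge--Yao, Krupchyk--Uhlmann). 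Unless you either invoke such an $n$-dimensional uniform resolvent estimate or supply a genuine mixed-norm/Stein--Tomas argument replacing it, the central estimate of your plan remains unproven; the remaining steps (Lemma on $\mathcal{D}(H(\theta_\tau))\subset W^{2,\frac{2n}{n+2}}$, the $q^\sharp$ splitting, and the analytic Fredholm/direct-integral conclusion, including citing the analytic direct integral results for absence of singular continuous spectrum in the self-adjoint case) are standard and match the paper.
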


\begin{rem}
The metric $c^{-1} g$ is independent of $x_1$ and therefore satisfies the assumptions of \cite{Friedlander_2002}. On the other hand,  no symmetry condition is imposed on the conformal factor $c$  and the potential $q$.  
\end{rem}

Our inspiration for considering metrics of the form \eqref{eq_metric_transv} came from the recent works on inverse boundary value problems for Schr\"odinger operators on compact Riemannian manifolds with boundary, equipped with metrics of this form, see \cite{Dos_Santos_F_Kenig_Salo_Uhlmann}.

We would like to mention that the problem of absolute continuity of the spectrum of the Schr\"odinger operator $H$ on a smooth cylinder $M\times \R^m$ was treated in \cite{Kachkov_Filonov_2010}, see also references given there. Here 
$M$ is a smooth compact Riemannian manifold, and the metric $g$ on $M\times \R^m$ is  a product of a Riemannian metric on $M$ and the Euclidean metric on $\R^m$. Furthermore, the potential $q$ is assumed to be periodic with respect to the Euclidean variables. In the case when the dimension of the cylinder is $\ge 3$, the absolute continuity is established in \cite{Kachkov_Filonov_2010} when $q\in L^{n/2+\varepsilon}_{\text{loc}}(M\times \R^m)$, $\varepsilon >0$.

Let us finish the introduction by making some indications concerning the main steps in the proof of Theorem \ref{thm_main}. 
First, as a consequence of general spectral theory, it will be seen that it is sufficient to treat the case when the conformal factor $c$ in \eqref{eq_metric_transv} satisfies $c=1$, and therefore to work with the operator $D_{x_1}^2-\Delta_{g_0(x')}+q$. We would like to show the absence of eigenvalues for this operator, and replacing $q$ by $q-\lambda$ we reduce the problem to establishing that zero is not an eigenvalue. An application of the Floquet theory combined with the Thomas approach, see Proposition \ref{prop_Floquet_Thomas} below, allows us next to conclude that it suffices to find  $\theta\in \C^n$ such that 
\[
\text{Ker}\, (H(\theta))=\{0\}.
\]
Here the operator $H(\theta)$, acting on the torus $\mathbb{T}^n=\R^n/2\pi\Z^n$, is given by
\begin{align*}
H(\theta)=|g|^{-1/2}(D_{x_j}+\theta_j)(|g|^{1/2}  g^{jk}(D_{x_k}+\theta_k))+q, \quad 1\le j,k\le n.
\end{align*}
See subsection \ref{subsec_A2} for the definition of this operator using the method of quadratic forms.

We shall make the following choice of the complex quasimomentum $\theta$, for $\tau\in \R$,
\[
\theta_\tau=\bigg(\frac{1}{2}+i\tau,0,\dots, 0 \bigg)\in \C^n,
\]
with the corresponding family of operators given by 
\[
H(\theta_\tau)=\bigg(D_{x_1}+\frac{1}{2}\bigg)^2+2i\tau \bigg(D_{x_1}+\frac{1}{2}\bigg)-\tau^2 -\Delta_{g_0(x')}+q.
\]

In the case when $q\in L^\infty(\R^n)$, the fact that we know explicitly the eigenvalues of the one--dimensional normal operator $\big(D_{x_1}+\frac{1}{2}\big)^2+2i\tau \big(D_{x_1}+\frac{1}{2}\big)-\tau^2$ on $\mathbb{T}^1$ implies that for $\tau\in \R$ with $|\tau|$ sufficiently large, we have  
\[
\frac{|\tau|}{2} \|u\|_{L^2(\mathbb{T}^n)}\le  \| H(\theta_\tau)u\|_{L^2(\mathbb{T}^n)},
\]
for $u\in \mathcal{D}(H (\theta_\tau) )$, and therefore, $\text{Ker}\,(H(\theta_\tau))=\{0\}$. 

In the case when $q\in L^{n/2}_{\text{loc}}(\R^n)$, we shall show that there exists a constant $C>0$ such that for $\tau\in \R$ with $|\tau|$ sufficiently large, the following estimate holds, 
\begin{equation}
\label{eq_int_L_p_est}
 \|u\|_{L^{\frac{2n}{n-2}}(\mathbb{T}^n)}\le  C \|H(\theta_\tau) u  \|_{L^{\frac{2n}{n+2}}(\mathbb{T}^n)},
\end{equation}
when $u\in \mathcal{D}(H(\theta_\tau))$, and thus, $\text{Ker}\,(H(\theta_\tau))=\{0\}$.

When establishing \eqref{eq_int_L_p_est}, the crucial ingredients are spectral cluster estimates for the  non-negative elliptic self-adjoint operator  
\[
\bigg(D_{x_1}+\frac{1}{2}\bigg)^2-\Delta_{g_0(x')},
\] 
acting on  $L^2(\mathbb{T}^n)$,   see \cite{Sogge_1988},  \cite{Seeger_Sogge_1989},   \cite{Sogge_book}, and  uniform resolvent estimates for it, obtained recently in   \cite{Dos_Santos_F_Kenig_Salo_resolvent},  \cite{Bourgain_Shao_Sogge_Yao}, \cite{Krup_Uhlmann_resolvent}.  

Let us point out that the idea of using the spectral cluster estimates of \cite{Sogge_1988}, and uniform $L^p$ resolvent estimates for constant coefficient elliptic operators on the torus, in the study of absolute continuity of the spectrum of the Schr\"odinger operator goes back to the work \cite{Shen_2001}, where the absolute continuity of the spectrum of a 
Schr\"odinger operator with a Euclidean metric and a potential in $L^{n/2}_{\text{loc}}$ was established.  The spectral cluster estimates of  \cite{Sogge_1988} were also used in \cite{Kachkov_Filonov_2010}. In this paper we use the recently established uniform $L^p$ resolvent estimates for elliptic self-adjoint operators with variable coefficients. 

The paper is organized as follows.  After explaining how to get rid of the conformal factor in the metric in Section  \ref{sec_conf},   Section \ref{sec_3} is devoted to the proof of Theorem \ref{thm_main} in the special case of a bounded potential.  The proof in this case is quite straightforward and is presented here as a warm-up, before handling the general case in Section \ref{sec_4}. Appendix  \ref{sec_1} contains some standard material pertaining to the definition of our operators, review of the Floquet theory, and a description of the Thomas approach to the absolute continuity problem. It is presented merely for the convenience of the reader.

\section{Removing the conformal factor}

\label{sec_conf}
In the case when $q\in L^{\frac{n}{2}}_{\text{loc}}(\R^n)$ is real--valued,  it follows from Remark \ref{rem_app_selfadj} in Appendix \ref{sec_1} that the singular continuous component of the spectrum of the Schr\"odinger operator $H$ is empty, and the pure point spectrum is at most discrete, consisting only of isolated points without finite accumulation points. To establish the absolute continuity of $H$ it suffices therefore to show the absence of eigenvalues. Hence,  in what follows we shall concentrate on proving the absence of eigenvalues in the general non-self-adjoint case, i.e. when $q$ is complex--valued.

We have the following conformal relation, see \cite{Dos_Santos_F_Kenig_Salo_Uhlmann},
\[
c^{\frac{n+2}{4}} (-\Delta_g +q)(c^{\frac{-(n-2)}{4}} u)= (-\Delta_{c^{-1}g}  +q_c) u,
\]
for $u\in \mathcal{D}$. Here 
\[
q_c=cq+ c^{\frac{n+2}{4}}(-\Delta_g)(c^{-\frac{(n-2)}{4}})\in L^{\frac{n}{2}}_{\text{loc}}(\R^n)
\]
is $2\pi\Z^n$--periodic, and
\begin{align*}
\mathcal{D}&:=\mathcal{D}(-\Delta_{c^{-1}g}  +q_c)=\{u\in H^1(\R^n): c^{\frac{n+2}{4}} (-\Delta_g +q)c^{\frac{-(n-2)}{4}}u\in L^2(\R^n)\}\\
&=
\{u\in H^1(\R^n): (g^{jk} D_{x_j} D_{x_k}+q)u\in L^2(\R^n)\}.
\end{align*}
The last equality follows since $c\in C^\infty(\R^n)$ is a strictly positive periodic function.

Assume that $\lambda\in \C$ is an eigenvalue of $-\Delta_g+q$. Then we have  for some $u\in \mathcal{D}$, not vanishing identically, 
\begin{equation}
\label{eq_conf_transf}
0 = c^{\frac{(n+2)}{4}} (-\Delta_g+q-\lambda)u=(-\Delta_{c^{-1}g}+q_{c} -c \lambda)(c^{\frac{n-2}{4}} u),
\end{equation}
and therefore $0$ is an eigenvalue of the operator $-\Delta_{c^{-1}g}+q_{c} -c \lambda$.  

To establish the absence of eigenvalues of the operator $H$ it is thus sufficient to prove that zero is not an eigenvalue of the operator $-\Delta_{c^{-1}g}+q$ with an arbitrary periodic $q\in  L^{\frac{n}{2}}_{\text{loc}}(\R^n)$. In what follows we shall therefore assume that the metric $g$ is of the form \eqref{eq_metric_transv}, where the conformal  factor $c=1$. 

It follows from Proposition \ref{prop_Floquet_Thomas} in Appendix \ref{sec_1} that to prove that  zero is not an eigenvalue of the operator $-\Delta_{g}+q$ it suffices to show that  there exists $\theta\in \C^n$ such that the operator $H(\theta)$, defined in \eqref{eq_2_2_0}, is injective.

Let $\tau\in \R$, and let us set
\[
\theta_\tau=\bigg(\frac{1}{2}+i\tau,0,\dots, 0 \bigg)\in \C^n,
\]
and 
\[
H_0(\theta_\tau)=\bigg(D_{x_1}+\frac{1}{2}\bigg)^2+2i\tau \bigg(D_{x_1}+\frac{1}{2}\bigg)-\tau^2 -\Delta_{g_0(x')},
\]
so that
\[
H(\theta_\tau)=H_0(\theta_\tau)+q.
\]
Theorem \ref{thm_main} will follow once we prove that the operator $H(\theta_\tau)$ is injective for $\tau\in \R$ with $|\tau|$ sufficiently large.

\section{Proof of Theorem \ref{thm_main} in the case of a bounded potential}

\label{sec_3}

\begin{prop} 
\label{prop_carleman_L_infty}
For all $\tau\in \R$ with $|\tau|\ge 1$, the following estimate holds,
\begin{equation}
\label{eq_Carleman_1}
|\tau| \|u\|_{L^2(\mathbb{T}^n)}\le \| H_0 (\theta_\tau) u  \|_{L^2(\mathbb{T}^n)},
\end{equation}
for $u\in H^{2}(\mathbb{T}^n)$. 
\end{prop}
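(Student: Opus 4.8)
The plan is to diagonalize $H_0(\theta_\tau)$ simultaneously in the $x_1$-Fourier variable and in the eigenbasis of the transversal operator $-\Delta_{g_0(x')}$. Since $H_0(\theta_\tau)$ is a sum of the commuting self-adjoint operators $(D_{x_1}+\tfrac12)^2 + 2i\tau(D_{x_1}+\tfrac12) - \tau^2$ (acting in $x_1$) and $-\Delta_{g_0(x')}$ (acting in $x'$, with discrete non-negative spectrum $0 = \mu_0 \le \mu_1 \le \cdots$ on the compact manifold $\mathbb{T}^{n-1}$), the operator is normal and we can expand any $u\in H^2(\mathbb{T}^n)$ as $u = \sum_{k\in\Z}\sum_{\ell} \widehat u_{k,\ell}\, e^{ikx_1}\phi_\ell(x')$, where $\{\phi_\ell\}$ is an orthonormal basis of eigenfunctions. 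On each mode the operator acts as multiplication by
\[
\lambda_{k,\ell}(\tau) = \Bigl(k+\tfrac12\Bigr)^2 + 2i\tau\Bigl(k+\tfrac12\Bigr) - \tau^2 + \mu_\ell.
\]

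The key estimate is then the scalar lower bound $|\lambda_{k,\ell}(\tau)| \ge |\tau|$ for all $k\in\Z$, all $\ell$, and all real $\tau$ with $|\tau|\ge 1$. By Parseval this immediately gives \eqref{eq_Carleman_1}. To prove the scalar bound, write $a = k+\tfrac12 \in \tfrac12 + \Z$, so $|a|\ge\tfrac12$ and in fact $a$ is bounded away from $0$ and from every integer; then
\[
|\lambda_{k,\ell}(\tau)|^2 = \bigl(a^2 - \tau^2 + \mu_\ell\bigr)^2 + 4\tau^2 a^2.
\]
Since $a^2 \ge \tfrac14$, the second term alone gives $|\lambda_{k,\ell}(\tau)|^2 \ge 4\tau^2 a^2 \ge \tau^2$, which is exactly $|\lambda_{k,\ell}(\tau)|\ge|\tau|$. (Here the nonnegativity of $\mu_\ell$ is not even needed for this particular bound, though it confirms $\mu_\ell$ never conspires against us; the half-integer shift from the quasimomentum $\tfrac12$ is what keeps $a$ away from $0$ and makes the imaginary part nondegenerate.)

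The only genuine point requiring care is the justification that $H_0(\theta_\tau)$ really is the multiplier operator described above on the stated domain $H^2(\mathbb{T}^n)$ — i.e. that the formal Fourier/eigenfunction expansion converges in $L^2$ and commutes with applying the operator. This is standard: $-\Delta_{g_0(x')}$ has compact resolvent on $L^2(\mathbb{T}^{n-1})$ so its eigenfunctions form an orthonormal basis, the mixed system $\{e^{ikx_1}\phi_\ell(x')\}$ is then an orthonormal basis of $L^2(\mathbb{T}^n)$, and for $u\in H^2$ the images $H_0(\theta_\tau)u$ have $L^2$-convergent expansions with coefficients $\lambda_{k,\ell}(\tau)\widehat u_{k,\ell}$. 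With this in hand, summing $|\lambda_{k,\ell}(\tau)|^2|\widehat u_{k,\ell}|^2 \ge \tau^2 |\widehat u_{k,\ell}|^2$ over all modes and taking square roots yields the proposition. I expect no substantive obstacle here; the content is entirely in the elementary scalar inequality, and the role of the bounded-potential hypothesis only enters in the subsequent application (perturbing $H_0(\theta_\tau)$ by $q$), not in Proposition~\ref{prop_carleman_L_infty} itself.
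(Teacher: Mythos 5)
Your proof is correct and follows essentially the same route as the paper: separate the $x_1$-frequencies, use that $|k+\tfrac12|\ge\tfrac12$ so the imaginary part $2\tau(k+\tfrac12)$ forces $|\lambda_{k,\ell}(\tau)|\ge|\tau|$, and conclude by Parseval. The only (cosmetic) difference is that you also diagonalize $-\Delta_{g_0(x')}$ in its eigenbasis, whereas the paper keeps it abstract here and invokes the self-adjoint resolvent bound $\|(-\Delta_{g_0}-z)^{-1}\|\le|\operatorname{Im}z|^{-1}$; the full joint eigenfunction expansion you use is exactly what the paper deploys later for Proposition~\ref{prop_carleman_L_n_2}.
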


\begin{proof}
By a density argument it suffices to prove the estimate \eqref{eq_Carleman_1} for  $u\in C^\infty(\mathbb{T}^n)$. Expanding $u$ in the Fourier series with respect to $x_1$, we have
\[
u(x_1,x')=\sum_{j\in \Z} e^{ijx_1} u_j(x'),
\]
where 
\[
u_j(x')=\frac{1}{2\pi} \int_0^{2\pi} u(y_1,x')e^{-ijy_1}dy_1,
\]
and therefore we get
\begin{align*}
H_0(\theta_\tau)u&=\bigg( \bigg(D_{x_1}+\frac{1}{2}\bigg)^2+2i\tau \bigg(D_{x_1}+\frac{1}{2}\bigg)-\tau^2 - \Delta_{g_0(x')}\bigg) u\\
&=
\sum_{j\in \Z}  \bigg( \bigg(j+\frac{1}{2}\bigg)^2+2i\tau \bigg(j+\frac{1}{2}\bigg)-\tau^2 -\Delta_{g_0(x')} \bigg) e^{ijx_1} u_j(x').
\end{align*}

Since the operator $-\Delta_{g_0(x')}$, acting on $L^2(\mathbb{T}^{n-1})$, is self-adjoint, we have 
\[
\|(-\Delta_{g_0(x')}-z)^{-1}\|_{L^2(\mathbb{T}^{n-1})\to L^2(\mathbb{T}^{n-1})}\le \frac{1}{|\text{Im}\, z|}, \quad \text{Im}\, z\ne 0,
\]
and hence, as  $|j+1/2|\ge 1/2$, $j\in \Z$, for $|\tau|\ge 1$, we get
\[
\bigg\|\bigg( \bigg(j+\frac{1}{2}\bigg)^2+2i\tau \bigg(j+\frac{1}{2}\bigg)-\tau^2 -\Delta_{g_0(x')} \bigg)^{-1}\bigg\|_{L^2(\mathbb{T}^{n-1})\to L^2(\mathbb{T}^{n-1})}\le  \frac{1}{|\tau|}.
\]

By Parseval's identity, we obtain that 
\begin{align*}
\frac{1}{2\pi }\| H_0(\theta_\tau)u &\|^2_{L^2(\mathbb{T}^n)}\\
&=\sum_{j\in \Z} \bigg\| \bigg( \bigg(j+\frac{1}{2}\bigg)^2+2i\tau \bigg(j+\frac{1}{2}\bigg)-\tau^2 -\Delta_{g_0(x')} \bigg) u_j(x')\bigg\|^2_{L^2(\mathbb{T}^{n-1})}\\
&\ge \frac{1}{2\pi} |\tau|^2\| u\|^2_{L^2(\mathbb{T}^n)}.
\end{align*}
 The proof of Proposition \ref{prop_carleman_L_infty} is complete. 
\end{proof}

Let $q\in L^\infty(\mathbb{T}^n)$, so that $\mathcal{D}(H (\theta_\tau) )=H^2(\mathbb{T}^n)$.  Thus, by Proposition \ref{prop_carleman_L_infty}  we conclude that for $|\tau|\ge 1$ sufficiently large, 
\[
\frac{|\tau|}{2} \|u\|_{L^2(\mathbb{T}^n)}\le  \| (H_0 (\theta_\tau)+q) u\|_{L^2(\mathbb{T}^n)},
\]
for $u\in \mathcal{D}(H (\theta_\tau) )$.   The proof of  Theorem \ref{thm_main} in the case $q\in L^\infty$ is therefore complete.

\section{Proof of Theorem \ref{thm_main} in the case of a potential $q\in L^{\frac{n}{2}}(\mathbb{T}^n)$ }
\label{sec_4}

Let us start by recalling the following chain of continuous inclusions, where the first and the last
ones follow from the Sobolev embedding theorem, 
\[
H^1(\mathbb{T}^n)\hookrightarrow L^{\frac{2n}{n-2}}(\mathbb{T}^n) \hookrightarrow L^{2}(\mathbb{T}^n) \hookrightarrow L^{\frac{2n}{n+2}}(\mathbb{T}^n) \hookrightarrow H^{-1}(\mathbb{T}^n).
\]

We shall need the following result. 
\begin{lem}
\label{lem_domain}
\label{lem_domain_emb_L_p}
 Let $q\in L^{\frac{n}{2}}(\mathbb{T}^n)$.  Then  
 \[
 \mathcal{D}(H(\theta_\tau))\subset  W^{2,\frac{2n}{n+2}}(\mathbb{T}^n).
 \] 
\end{lem}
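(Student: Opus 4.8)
The plan is to start from the definition of the domain $\mathcal{D}(H(\theta_\tau))$. By the form-theoretic construction (subsection \ref{subsec_A2}), an element $u\in\mathcal{D}(H(\theta_\tau))$ lies in $H^1(\mathbb{T}^n)$ and satisfies $H_0(\theta_\tau)u + qu = f$ for some $f\in L^2(\mathbb{T}^n)$, where this equation is understood a priori in $H^{-1}(\mathbb{T}^n)$. The first step is to check that $qu\in L^{\frac{2n}{n+2}}(\mathbb{T}^n)$: since $u\in H^1(\mathbb{T}^n)\hookrightarrow L^{\frac{2n}{n-2}}(\mathbb{T}^n)$ by Sobolev embedding and $q\in L^{n/2}(\mathbb{T}^n)$, Hölder's inequality with exponents $\frac{n}{2}$ and $\frac{n}{n-2}$ gives $qu\in L^{\frac{2n}{n+2}}(\mathbb{T}^n)$ with $\|qu\|_{L^{\frac{2n}{n+2}}}\le \|q\|_{L^{n/2}}\|u\|_{L^{\frac{2n}{n-2}}}$. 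Combined with $f\in L^2(\mathbb{T}^n)\hookrightarrow L^{\frac{2n}{n+2}}(\mathbb{T}^n)$, we conclude $H_0(\theta_\tau)u = f - qu \in L^{\frac{2n}{n+2}}(\mathbb{T}^n)$.

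The second step is elliptic regularity. The operator $H_0(\theta_\tau) = \big(D_{x_1}+\tfrac12\big)^2 + 2i\tau\big(D_{x_1}+\tfrac12\big) - \tau^2 - \Delta_{g_0(x')}$ is a second-order differential operator on $\mathbb{T}^n$ whose principal part is $D_{x_1}^2 - \Delta_{g_0(x')}$, which is elliptic since $g_0$ is a Riemannian metric; the lower-order terms have smooth (indeed constant, apart from the smooth coefficients of $\Delta_{g_0}$) coefficients. Therefore the Calderón--Zygmund $L^p$ theory for elliptic operators applies: from $u\in H^1(\mathbb{T}^n)$ (in particular $u\in L^{\frac{2n}{n+2}}(\mathbb{T}^n)$, since $\frac{2n}{n+2}<2$ on the compact torus) and $H_0(\theta_\tau)u\in L^{\frac{2n}{n+2}}(\mathbb{T}^n)$ one obtains $u\in W^{2,\frac{2n}{n+2}}(\mathbb{T}^n)$, with the interior estimate becoming a global estimate on the closed manifold $\mathbb{T}^n$. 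Concretely, one can freeze $\tau$ (the dependence on $\tau$ is irrelevant for this qualitative statement) and invoke the standard $W^{2,p}$ estimate $\|u\|_{W^{2,p}}\le C(\|H_0(\theta_\tau)u\|_{L^p}+\|u\|_{L^p})$ valid for $1<p<\infty$ on a compact manifold; here $p=\frac{2n}{n+2}\in(1,2)$ because $n\ge 3$.

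The main obstacle, such as it is, is bookkeeping rather than substance: one must be careful that the equation $H_0(\theta_\tau)u + qu = f$ genuinely holds in the distributional sense that the elliptic estimate requires, which is exactly what the form definition of $H(\theta_\tau)$ in subsection \ref{subsec_A2} provides, and one must make sure the $L^p$ elliptic estimate is cited in a form valid on $\mathbb{T}^n$ for the operator with its variable coefficients coming from $g_0$. One could alternatively run a bootstrap starting from $u\in H^1$: first deduce $u\in H^2_{\mathrm{loc}}$-type gains are not available because $qu$ is only in $L^{\frac{2n}{n+2}}$, so the single application of $W^{2,p}$ regularity at $p=\frac{2n}{n+2}$ is in fact the optimal and natural step, and no iteration is needed. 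This completes the plan.
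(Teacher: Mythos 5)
Your proposal is correct and follows essentially the same route as the paper: you show $qu\in L^{\frac{2n}{n+2}}(\mathbb{T}^n)$ via $\mathcal{D}(H(\theta_\tau))\subset H^1(\mathbb{T}^n)$, the Sobolev embedding, and H\"older's inequality, and then conclude $u\in W^{2,\frac{2n}{n+2}}(\mathbb{T}^n)$ by $L^p$ elliptic regularity for the smooth-coefficient elliptic operator $H_0(\theta_\tau)$. The extra detail you supply on the global $W^{2,p}$ estimate on the compact torus is a reasonable elaboration of the step the paper cites simply as elliptic regularity.
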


\begin{proof}

Let $u\in \mathcal{D}(H(\theta_\tau))$. Then $f:=H(\theta_\tau)u\in L^2(\mathbb{T}^n)$.  Using the fact that $\mathcal{D}(H(\theta_\tau))\subset H^1(\mathbb{T}^n)$, Sobolev's embedding $H^1(\mathbb{T}^n)\hookrightarrow L^{\frac{2n}{n-2}}(\mathbb{T}^n)$, and H\"older's inequality, 
\[
\|qu\|_{L^{\frac{2n}{n+2}}(\mathbb{T}^n)}\le \|q\|_{L^{\frac{n}{2}}(\mathbb{T}^n)} \|u\|_{L^{\frac{2n}{n-2}}(\mathbb{T}^n)},
\] 
we get $qu\in L^{\frac{2n}{n+2}}(\mathbb{T}^n)$. Hence, we have
\[
H_0(\theta_\tau)u=f-qu\in L^{\frac{2n}{n+2}}(\mathbb{T}^n).
\]
As $u\in L^{\frac{2n}{n+2}}(\mathbb{T}^n)$ and the operator $H_0(\theta_\tau)$ is elliptic with smooth coefficients, by elliptic regularity, we conclude that $u\in W^{2,\frac{2n}{n+2}}(\mathbb{T}^n)$.  The proof  is complete. 
\end{proof}

\begin{prop}
\label{prop_carleman_L_n_2}
There  exists a constant $C>0$ such that for all $\tau\in \R$ with $|\tau|$ sufficiently large, the following estimates hold,
\begin{equation}
\label{eq_L_p_L_2}
\|u\|_{L^2(\mathbb{T}^n)}\le \frac{C}{|\tau|^{1/2} } \| H_0(\theta_\tau)  u \|_{L^{\frac{2n}{n+2}}(\mathbb{T}^n)},
\end{equation}
and
\begin{equation}
\label{eq_L_p_L_p}
\|u\|_{L^{\frac{2n}{n-2}}(\mathbb{T}^n)}\le C \| H_0 (\theta_\tau) u  \|_{L^{\frac{2n}{n+2}}(\mathbb{T}^n)},
\end{equation}
 when $u\in W^{2, \frac{2n}{n+2}}(\mathbb{T}^n)$.
\end{prop}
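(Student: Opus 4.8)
The plan is to obtain both estimates from a Fourier decomposition in $x_1$, exactly as in the proof of Proposition \ref{prop_carleman_L_infty}, but now replacing the scalar resolvent bound for $-\Delta_{g_0(x')}$ by the \emph{uniform $L^p$ resolvent estimates} for the self-adjoint operator $-\Delta_{g_0(x')}$ on the compact manifold $\mathbb{T}^{n-1}$. Writing $u(x_1,x')=\sum_{j\in\Z}e^{ijx_1}u_j(x')$, one has
\[
H_0(\theta_\tau)u=\sum_{j\in\Z}e^{ijx_1}\bigg(\Big(j+\tfrac12\Big)^2+2i\tau\Big(j+\tfrac12\Big)-\tau^2-\Delta_{g_0(x')}\bigg)u_j(x')
=\sum_{j\in\Z}e^{ijx_1}\big(-\Delta_{g_0(x')}-z_j\big)u_j(x'),
\]
where $z_j=\tau^2-2i\tau(j+\tfrac12)-(j+\tfrac12)^2=-\big((j+\tfrac12)-i\tau\big)^2$. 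The key point is that $z_j$ lies off the positive real axis with $|\Im z_j|=2|\tau|\,|j+\tfrac12|\ge|\tau|$ and $|z_j|\gtrsim \tau^2+j^2$, so these are spectral parameters in the region covered by the uniform resolvent estimates of \cite{Dos_Santos_F_Kenig_Salo_resolvent}, \cite{Bourgain_Shao_Sogge_Yao}, \cite{Krup_Uhlmann_resolvent}: namely
\[
\big\|(-\Delta_{g_0(x')}-z_j)^{-1}\big\|_{L^{\frac{2(n-1)}{(n-1)+2}}(\mathbb{T}^{n-1})\to L^{\frac{2(n-1)}{(n-1)-2}}(\mathbb{T}^{n-1})}\le C|z_j|^{-1/2}
\]
once $|z_j|$ is large enough, with $C$ independent of $j$ and $\tau$ (and one uses, in the exceptional range of $j$ where $j+\tfrac12$ is comparable to $|\tau|$, the favorable distance $|\Im z_j|\sim|z_j|$). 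A small technical nuisance: the $n-1$ appearing in these exponents differs from the $n$ in the Sobolev exponents of the statement, so I would instead combine the $(n-1)$-dimensional resolvent bound with the one-dimensional gain coming from the $x_1$ Fourier modes — i.e. run the argument of \cite{Shen_2001}, \cite{Kachkov_Filonov_2010}: treat $H_0(\theta_\tau)$ as a sum of a full-dimensional elliptic operator whose symbol $(\xi_1+\tfrac12)^2+p_{g_0}(x',\xi')-z$ with $z=\tau^2-2i\tau(\xi_1+\tfrac12)$ never vanishes, and invoke the uniform $L^{\frac{2n}{n+2}}\to L^{\frac{2n}{n-2}}$ resolvent estimate for the $n$-dimensional operator $\big(D_{x_1}+\tfrac12\big)^2-\Delta_{g_0(x')}$ directly, which is precisely the content of the cited variable-coefficient resolvent papers applied on $\mathbb{T}^n$.

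Concretely, I would proceed as follows. \textbf{Step 1.} By density (using Lemma \ref{lem_domain}, which guarantees $\mathcal{D}(H(\theta_\tau))\subset W^{2,\frac{2n}{n+2}}$, and approximating in that space by smooth functions) reduce to $u\in C^\infty(\mathbb{T}^n)$. \textbf{Step 2.} Set $P=\big(D_{x_1}+\tfrac12\big)^2-\Delta_{g_0(x')}$, a nonnegative elliptic self-adjoint operator on $L^2(\mathbb{T}^n)$, and observe $H_0(\theta_\tau)=P-w$ with $w=\tau^2-2i\tau\big(D_{x_1}+\tfrac12\big)$ — but since $w$ is not a scalar, pass to the $x_1$-Fourier side as above so that on each mode $H_0(\theta_\tau)$ acts as $-\Delta_{g_0(x')}-z_j$, and note $\Im z_j\ne0$ for all $j$ so each block is invertible. \textbf{Step 3.} Apply the uniform resolvent estimate for $-\Delta_{g_0(x')}$ on $\mathbb{T}^{n-1}$ on each block; sum in $j$ using the Littlewood–Paley/Minkowski-type argument (the $L^{\frac{2n}{n+2}}$-norm controls the $\ell^2_j L^{\frac{2n}{n+2}}_{x'}$-norm of the Fourier coefficients since $\frac{2n}{n+2}\le2$, and dually $\ell^2_j L^{\frac{2n}{n-2}}_{x'}$ controls $L^{\frac{2n}{n-2}}$ since $\frac{2n}{n-2}\ge2$) to get \eqref{eq_L_p_L_p}. \textbf{Step 4.} For \eqref{eq_L_p_L_2}, use the scalar self-adjoint bound $\|(-\Delta_{g_0}-z_j)^{-1}\|_{L^2\to L^2}\le|\Im z_j|^{-1}\le(|\tau|\,|j+\tfrac12|)^{-1}\le 2|\tau|^{-1}$... wait, that gives $|\tau|^{-1}$, not $|\tau|^{-1/2}$; instead interpolate the $L^2\to L^2$ bound $\lesssim|z_j|^{-1/2}|\tau|^{-1/2}$ — more precisely, $|\Im z_j|^{-1}=\big(2|\tau|(j+\tfrac12)\big)^{-1}$ and $|z_j|\sim\tau^2+j^2$, so $|\Im z_j|^{-1}\lesssim|\tau|^{-1/2}|z_j|^{-1/2}\cdot\big(|z_j|^{1/2}/(|\tau|^{1/2}(j+\tfrac12))\big)\lesssim|\tau|^{-1/2}|z_j|^{-1/2}$ after checking the last factor is bounded — then pair the $L^{\frac{2n}{n+2}}\to L^2$ mapping (Sobolev: $L^{\frac{2n}{n+2}}\hookrightarrow H^{-1}$, and $\|(-\Delta_{g_0}-z_j)^{-1}\|_{H^{-1}\to L^2}\lesssim|z_j|^{-1/2}$ for such $z_j$) with the extra $|\tau|^{-1/2}$ gain and sum.

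The main obstacle I anticipate is bookkeeping the two competing exponent shifts — the $n$ versus $n-1$ in the ambient dimension, and ensuring the resolvent estimates genuinely apply \emph{uniformly} across all Fourier modes $j$, including the transitional regime $|j|\sim|\tau|$ where $z_j$ is neither "large and close to the real axis" nor "comfortably off-axis". The cleanest route is to invoke the variable-coefficient uniform resolvent estimate for the $n$-dimensional operator $\big(D_{x_1}+\tfrac12\big)^2-\Delta_{g_0(x')}$ on $\mathbb{T}^n$ as a black box from \cite{Dos_Santos_F_Kenig_Salo_resolvent}, \cite{Bourgain_Shao_Sogge_Yao}, \cite{Krup_Uhlmann_resolvent} — applied with spectral parameter $z=\tau^2$ shifted into the complex plane by the $x_1$-frequency — thereby getting \eqref{eq_L_p_L_p} directly; \eqref{eq_L_p_L_2} then follows by combining it with \eqref{eq_Carleman_1} and complex interpolation, or independently by the Fourier-block estimate above. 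I would present the self-contained Fourier-block proof for \eqref{eq_L_p_L_2} and cite the resolvent estimates for \eqref{eq_L_p_L_p}.
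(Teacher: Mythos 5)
Your proposal assembles the right ingredients (Sogge's spectral cluster estimates, the uniform $L^{\frac{2n}{n+2}}\to L^{\frac{2n}{n-2}}$ resolvent estimates, a frequency decomposition in $x_1$, Littlewood--Paley), but the two steps that would actually carry the proof do not work as stated. For \eqref{eq_L_p_L_p} you cannot invoke the $n$--dimensional uniform resolvent estimate ``directly with spectral parameter shifted by the $x_1$--frequency'': the estimate \eqref{eq_3_8} is for a fixed scalar $z$, whereas in $H_0(\theta_\tau)=H_0(\theta_0)+2i\tau(D_{x_1}+\tfrac12)-\tau^2$ the effective parameter $\tau^2-2i\tau(j+\tfrac12)$ varies with the $x_1$--mode $j$. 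Bridging exactly this gap is the substance of the paper's proof: it localizes $f$ dyadically in $|D_{x_1}|$, compares $G_\tau$ on the block $2^{\nu-1}\le|j|<2^\nu$ with the \emph{fixed} resolvent $R(\tau^2-i(2^\nu+1)\tau)$, controls the difference by the spectral cluster estimates \eqref{eq_Sogges_estim_1}--\eqref{eq_Sogges_estim_2} and a summation over clusters uniform in $\nu$ and $\tau$, and then reassembles the blocks via one--dimensional Littlewood--Paley theory. Your fallback, mode-by-mode resolvent bounds for $-\Delta_{g_0}$ on $\mathbb{T}^{n-1}$, suffers from the exponent mismatch you yourself flagged, and in addition the summation device is false: for $p=\frac{2n}{n+2}<2$ the $L^p(\mathbb{T}^n)$ norm does not control the $\ell^2_j$ norm of the $x_1$--Fourier blocks (Hausdorff--Young only gives $\ell^{p'}_j$ with $p'>2$; an $L^p$ function need not even lie in $L^2$), and dually an $\ell^2_j$ bound on the output blocks does not control the $L^{\frac{2n}{n-2}}$ norm (one would need $\ell^{q'}_j$ with $q'<2$). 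Uniform per-mode bounds therefore do not sum to the claimed mixed-norm inequality; this is precisely why a genuine square-function argument over dyadic blocks, not plain Minkowski/Hausdorff--Young over single modes, is required.

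For \eqref{eq_L_p_L_2} your key inequality $|\Im z_j|^{-1}\lesssim|\tau|^{-1/2}|z_j|^{-1/2}$ fails in the regime $|j|\lesssim|\tau|$: at $j=0$ one has $|\Im z_0|^{-1}\sim|\tau|^{-1}$ while $|\tau|^{-1/2}|z_0|^{-1/2}\sim|\tau|^{-3/2}$, so the factor you set aside ``after checking the last factor is bounded'' is of size $|\tau|^{1/2}/|j+\tfrac12|$ and is unbounded. The alternative of interpolating \eqref{eq_Carleman_1} with \eqref{eq_L_p_L_p} also cannot produce \eqref{eq_L_p_L_2}: complex interpolation between the pairs $(L^2\to L^2,\ |\tau|^{-1})$ and $(L^{\frac{2n}{n+2}}\to L^{\frac{2n}{n-2}},\ C)$ moves the domain and target exponents simultaneously along one segment, and the pair $(L^{\frac{2n}{n+2}},L^{2})$ does not lie on it. The paper proves \eqref{eq_L_p_L_2} directly: expanding in the eigenbasis of $H_0(\theta_0)$ on $\mathbb{T}^n$, it combines the cluster estimate \eqref{eq_Sogges_estim_1} with the elementary lower bound $2\big|(j+\tfrac12)^2+2i\tau(j+\tfrac12)-\tau^2+\lambda_k\big|\ge\big|(j+\tfrac12)^2+\lambda_k-\tau^2\big|+|\tau|$ and shows that the resulting series over spectral clusters is $O(1/|\tau|)$; some correct replacement for this cluster summation is needed to obtain the $|\tau|^{-1/2}$ gain, and your proposal does not supply one.
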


\begin{proof}
 Here we use the notation and some ideas of \cite{Dos_Santos_F_Kenig_Salo_resolvent}. We denote by  $0=\lambda_0< \lambda_1\le \lambda_2\le \dots$ the sequence of eigenvalues of $-\Delta_{g_0(x')}$ on $\mathbb{T}^{n-1}$, counted with their multiplicities, and by $(\psi_k)_{k\ge 0}$  the corresponding sequence of eigenfunctions forming an orthonormal basis of $L^2(\mathbb{T}^{n-1})$, 
\[
-\Delta_{g_0(x')}\psi_k=\lambda_k\psi_k.
\]

The operator 
\[
H_0(\theta_0)=\bigg(D_{x_1}+\frac{1}{2}\bigg)^2-\Delta_{g_0(x')},
\] 
equipped with the domain $H^2(\mathbb{T}^n)$, is non-negative elliptic self-adjoint on the space $L^2(\mathbb{T}^n; |g|^{1/2}dx)$, whose eigenvalues are given by $(j+\frac{1}{2})^2+\lambda_k$, $j\in \mathbb{Z}$, $k\in \N$,
and the corresponding eigenfunctions are $\tilde \psi_{j,k}(x)=e^{ijx_1}\psi_k(x')$,
i.e. 
\[
H_0(\theta_0)\tilde \psi_{j,k}=\bigg(\bigg(j+\frac{1}{2}\bigg)^2+\lambda_k\bigg)\tilde \psi_{j,k}.
\]
We denote by $\pi_{j,k}:L^2(\mathbb{T}^n)\to L^2(\mathbb{T}^n)$ the orthogonal projection on the linear space, spanned by the eigenfunction $\tilde \psi_{j,k}$,
\[
\pi_{j,k} f(x)=\frac{1}{2\pi} \bigg(\int_{\mathbb{T}^n} f(y) e^{-ijy_1}\overline{\psi_k(y')} \sqrt{|g_0|}dy \bigg) e^{ijx_1} \psi_k(x').
\]
We have 
\[
\sum_{j\in \Z,k\in \N} \pi_{j,k}=I.
\]
Let us denote by $\chi_m$ the spectral projection operator on the space, generated by the eigenfunctions, corresponding to the $m$th spectral cluster of the operator $H_0(\theta_0)$,
\[
\chi_m=\sum_{m\le \sqrt{\big(j+\frac{1}{2}\big)^2+\lambda_k}<m+1} \pi_{j,k},\quad m\in\N.
\]
To establish the estimates \eqref{eq_L_p_L_2} and \eqref{eq_L_p_L_p} we shall need the spectral cluster estimates, obtained in \cite{Sogge_1988}, \cite{Seeger_Sogge_1989}, see also \cite{Sogge_book},
\begin{equation}
\label{eq_Sogges_estim_1}
\|\chi_m f \|_{L^2(\mathbb{T}^n)}\le C(1+m)^{1/2} \|f\|_{L^{\frac{2n}{n+2}}(\mathbb{T}^n)},
\end{equation}
and the dual estimates,
\begin{equation}
\label{eq_Sogges_estim_2}
\|\chi_m f \|_{L^{\frac{2n}{n-2}}(\mathbb{T}^n)}\le C(1+m)^{1/2} \|f\|_{L^2(\mathbb{T}^n)},
\end{equation}
when $f\in C^\infty(\mathbb{T}^n)$.

By a density argument it suffices to  establish the estimates \eqref{eq_L_p_L_2} and \eqref{eq_L_p_L_p} for $u\in C^\infty(\mathbb{T}^n)$.  
Consider the equation 
\[
H_0(\theta_\tau) u=f,
\]
with $u,f\in C^\infty(\mathbb{T}^n)$. Writing $u=\sum_{j,k}\pi_{j,k} u$ and $ f=\sum_{j,k}\pi_{j,k} f$, we get  
\[
\bigg(\bigg(j+\frac{1}{2}\bigg)^2+2i\tau \bigg(j+\frac{1}{2}\bigg)-\tau^2+\lambda_k \bigg)\pi_{j,k} u=\pi_{j,k} f,
\]
with $j\in \Z$ and $k\in \N$.  As $\big|j+\frac{1}{2}\big|\ge \frac{1}{2}$ for $j\in \Z$, we have 
\[
\bigg|\bigg(j+\frac{1}{2}\bigg)^2+2i\tau\bigg(j+\frac{1}{2}\bigg)-\tau^2+\lambda_k \bigg|\ge 2|\tau|\bigg|j+\frac{1}{2}\bigg| \ge |\tau|,
\]
and therefore, when $|\tau|\ge 1$, 
\[
u=G_\tau f:=\sum_{j=-\infty}^\infty \sum_{k=0}^\infty \frac{\pi_{j,k}f}{ \big(j+\frac{1}{2}\big)^2+2i\tau\big(j+\frac{1}{2}\big)-\tau^2+\lambda_k}.
\]
We now come to prove that there exists a constant $C>0$ such that for $\tau\in \R$ with $|\tau|\ge 1$, 
\begin{equation}
\label{eq_L_p_L_2_equiv}
\|G_\tau f\|_{L^2(\mathbb{T}^n)}\le \frac{C}{|\tau|^{1/2}} \|f\|_{L^{\frac{2n}{n+2}}(\mathbb{T}^n)}, 
\end{equation}
when  $f\in C^\infty(\mathbb{T}^n)$.  
Using \eqref{eq_Sogges_estim_1}, we get 
\begin{equation}
\label{eq_3_7_0}
\begin{aligned}
&\|G_\tau f\|_{L^2(\mathbb{T}^n)}^2=\sum_{j=-\infty}^\infty \sum_{k=0}^\infty \frac{\|\pi_{j,k}f\|_{L^2(\mathbb{T}^n)}^2}
{\big| \big(j+\frac{1}{2}\big)^2+2i\tau\big(j+\frac{1}{2}\big)-\tau^2+\lambda_k\big|^2}\\
&\le \sum_{m=0}^\infty \sup_{m\le \sqrt{\big(j+\frac{1}{2}\big)^2+\lambda_k}<m+1}\frac{1}{\big| \big(j+\frac{1}{2}\big)^2+2i\tau\big(j+\frac{1}{2}\big)-\tau^2+\lambda_k|^2}
\|\chi_mf \|_{L^2(\mathbb{T}^n)}^2\\
&\le C S \|f\|_{L^{\frac{2n}{n+2}}(\mathbb{T}^n)}^2,
\end{aligned}
\end{equation}
where 
\begin{equation}
\label{eq_def_Sigma}
S:= \sum_{m=0}^\infty (1+m)\sup_{m\le \sqrt{\big(j+\frac{1}{2}\big)^2+\lambda_k}<m+1}\frac{1}{\big| \big(j+\frac{1}{2}\big)^2+2i\tau\big(j+\frac{1}{2}\big)-\tau^2+\lambda_k\big|^2}.
\end{equation}

Let us show that the  series $S$ converges and behaves as $1/|\tau|$ for $|\tau|$ large. To that end we observe that 
\begin{equation}
\label{eq_3_4}
2\bigg| \bigg(j+\frac{1}{2}\bigg)^2+2i\tau\bigg(j+\frac{1}{2}\bigg)-\tau^2+\lambda_k\bigg|\ge  \bigg|\bigg(j+\frac{1}{2}\bigg)^2+\lambda_k -\tau^2\bigg|+|\tau|.
\end{equation}
Assume now that $m\le \sqrt{\big(j+\frac{1}{2}\big)^2+\lambda_k}<m+1$ and let $m\le |\tau|$. Then using the fact that  $|\tau|\ge 1$, we obtain that 
\begin{equation}
\label{eq_3_5_0}
\begin{aligned}
|m&^2-\tau^2|\le \bigg| \bigg(j+\frac{1}{2}\bigg)^2+\lambda_k -\tau^2\bigg|+\bigg|m^2-  \bigg(j+\frac{1}{2}\bigg)^2-\lambda_k\bigg|\\
&\le \bigg| \bigg(j+\frac{1}{2}\bigg)^2+\lambda_k -\tau^2\bigg|+ 2m+1 \le \bigg| \bigg(j+\frac{1}{2}\bigg)^2+\lambda_k -\tau^2\bigg|+ 3|\tau|,
\end{aligned}
\end{equation}
and therefore, 
\begin{equation}
\label{eq_3_5}
4 \bigg(\bigg| \bigg(j+\frac{1}{2}\bigg)^2+\lambda_k -\tau^2\bigg|+|\tau|\bigg )\ge |m^2-\tau^2|+|\tau|.
\end{equation}
When $m> |\tau|$, we have
\begin{equation}
\label{eq_3_6_0}
\bigg| \bigg(j+\frac{1}{2}\bigg)^2+\lambda_k -\tau^2\bigg|  \ge |m^2-\tau^2|.
\end{equation}
Now we are ready to estimate the sum of the series $S$, given by \eqref{eq_def_Sigma}. Using 
 \eqref{eq_3_4}, \eqref{eq_3_5} and \eqref{eq_3_6_0}, we get  that 
\begin{equation}
\label{eq_3_7}
\begin{aligned}
S
&\lesssim  \sum_{m=0}^\infty \frac{1+m}{(m^2-\tau^2)^2 +\tau^2}\lesssim \frac{1}{|\tau|^4}+\sum_{m=1}^\infty \frac{m}{(m^2-\tau^2)^2 +\tau^2}\\
&\lesssim \frac{1}{|\tau|}+\int_0 ^\infty \frac{t dt}{(t^2-\tau^2)^2+\tau^2}\lesssim \frac{1}{|\tau|} \int_{-\infty}^\infty \frac{ds}{s^2+1}\lesssim\frac{1}{|\tau|}.
\end{aligned}
\end{equation}
Here we have used the fact that the function $t\mapsto t/((t^2-\tau^2)^2+\tau^2)$ is increasing when $t\in [0,|\tau|/\sqrt{3})$ and is decreasing when $t\in (|\tau|/\sqrt{3}, \infty)$, and have performed the change of variables $s=t^2/|\tau|-|\tau|$ in the integral.  Combining  \eqref{eq_3_7_0} and \eqref{eq_3_7}, we obtain \eqref{eq_L_p_L_2_equiv} and therefore, \eqref{eq_L_p_L_2}.

Let us now prove the estimate  \eqref{eq_L_p_L_p} for  $u\in C^\infty(\mathbb{T}^n)$, which amounts to obtaining the following uniform estimate, 
\begin{equation}
\label{eq_L_p_L_p_equiv}
\|G_\tau f\|_{L^{\frac{2n}{n-2}}(\mathbb{T}^n)}\le C \| f\|_{L^{\frac{2n}{n+2}}(\mathbb{T}^n)},
\end{equation}
 for $f\in C^\infty(\mathbb{T}^n)$ and $\tau\in \R$ with $|\tau|$ sufficiently large. 

To that end we shall need the following uniform resolvent estimate for the elliptic self-adjoint operator $H_0(\theta_0)$: for each $\delta\in (0,1)$, there exists a constant $C>0$ such that for all $u\in C^\infty(\mathbb{T}^n)$ and all $z\in \C$ with  $\text{Im}\, z\ge \delta$, we have
\begin{equation}
\label{eq_3_8}
\|u\|_{L^{\frac{2n}{n-2}}(\mathbb{T}^n)}\le C \| ( H_0(\theta_0)-z^2 ) u \|_{L^{\frac{2n}{n+2}}(\mathbb{T}^n)},
\end{equation}  
see \cite{Dos_Santos_F_Kenig_Salo_resolvent}, \cite{Bourgain_Shao_Sogge_Yao} and \cite{Krup_Uhlmann_resolvent}.
When establishing the estimate  \eqref{eq_L_p_L_p_equiv}, we shall follow  \cite{Dos_Santos_F_Kenig_Salo_resolvent} closely, and use a localization argument to  deduce this estimate from  \eqref{eq_3_8}, see also \cite{Kenig_Ruiz_Sogge} and \cite{Shen_2001}. 
We have
\[
f(x_1,x')=\sum_{j\in \Z} e^{ijx_1} f_j(x'), \quad f_j(x')=\frac{1}{2\pi} \int_0^{2\pi} f(y_1,x') e^{-ijy_1}dy_1.
\]
Letting $\chi$ be the characteristic function of the interval $[1/2,1)$, and further localizing $f$ in frequency with respect to the variable $x_1$, we introduce
\begin{align*}
\tilde f_\nu(x_1,x')&= \chi\bigg(\frac{|D_{x_1}|}{2^\nu}\bigg)f(x_1,x')=\sum_{2^{\nu-1}\le |j|< 2^\nu} e^{ijx_1} f_j(x'), \quad \nu= 1,2,\dots,\\
\tilde f_0(x_1,x')&=f_0(x'),
\end{align*}
so that 
\[
f=\sum_{\nu=0}^\infty \tilde f_\nu.
\]
Since the operators $\chi \big(\frac{|D_{x_1}|}{2^\nu}\big)$ and $H_0(\theta_\tau)$ commute, we  observe that the localization of $G_\tau f$ in frequency with respect to the variable $x_1$ is given by 
\[
(\widetilde{G_\tau f})_\nu=G_\tau \tilde f_\nu,\quad \nu=0,1,2,\dots.
\]
Standard arguments based on the one--dimensional Littlewood--Paley theory \cite[Theorem 8.4]{Duoandikoetxea_book},   imply that in order to prove \eqref{eq_L_p_L_p_equiv} it suffices to establish the uniform estimates,
\begin{equation}
\label{eq_3_9}
\|G_\tau \tilde f_\nu \|_{L^{\frac{2n}{n-2}}(\mathbb{T}^n)}\le C \| \tilde f_\nu \|_{L^{\frac{2n}{n+2}}(\mathbb{T}^n)}, \quad \nu=0,1,2,\dots,
\end{equation}
see also \cite[Lemma 2.3]{Krup_Uhlmann_resolvent}. 

When proving \eqref{eq_3_9}, we introduce the resolvent of $H_0(\theta_0)$, given by   
\[
R(\zeta):=( H_0(\theta_0)-\zeta )^{-1}=\sum_{j=-\infty}^\infty \sum_{k=0}^\infty
\frac{\pi_{j,k}}{(j+1/2)^2 +\lambda_k -\zeta}, \quad\zeta\notin \text{Spec}(H_0(\theta_0)).
\]
For future reference let us observe that for $\zeta=\tau^2-i\rho\tau$ with $\rho\ge 1$, we have
\begin{equation}
\label{eq_im_zeta}
\text{Im}\sqrt{\zeta}=\sqrt{\frac{|\zeta|-\text{Re}\, \zeta}{2}}=\sqrt{\frac{|\tau|\sqrt{\tau^2+\rho^2}-\tau^2}{2}}=\frac{\rho}{2}+\mathcal{O}\bigg(\frac{1}{\tau^2}\bigg)\ge \frac{1}{4},
\end{equation}
provided that $|\tau|$ is large.

In the case when $\nu=0$, we have
\[
G_\tau \tilde f_0=R(\tau^2-i\tau) \tilde f_0,
\]
and thus, \eqref{eq_3_9} becomes the uniform resolvent estimate \eqref{eq_3_8} with $z^2=\tau^2-i\tau$. 

Let us show that in the case $\nu\ge 1$,   \eqref{eq_3_9} follows from the resolvent estimate
\begin{equation}
\label{eq_3_9_1}
\|R(\tau^2- i(2^\nu+1)\tau ) \tilde f_\nu \|_{L^{\frac{2n}{n-2}}(\mathbb{T}^n)}\lesssim \| \tilde f_\nu \|_{L^{\frac{2n}{n+2}}(\mathbb{T}^n)}
\end{equation}
where the implicit  constant is independent of $\tau$ and $\nu$. To that end, we write 
\[
(R(\tau^2- i(2^\nu+1)\tau ) -G_\tau  ) \tilde f_\nu=\sum_{j=-\infty}^\infty \sum_{k=0}^\infty a_{j,k,\nu}(\tau) \pi_{j,k} \tilde f_\nu,
\]
where 
\begin{align*}
a_{j,k,\nu}(\tau)=\frac{i\tau(2j-2^{\nu})1_{[2^{\nu-1},2^\nu)}(|j|)}{\big(\big(j+\frac{1}{2}\big)^2 + 2i\big(j+\frac{1}{2}\big)\tau -\tau^2+\lambda_k\big)\big(\big(j+\frac{1}{2}\big)^2 + i(2^\nu+1)\tau -\tau^2+\lambda_k\big)}.
\end{align*}
Using the fact that $\sum_{m=0}^\infty \chi_m^2=1$, and  spectral cluster estimates \eqref{eq_Sogges_estim_1} and \eqref{eq_Sogges_estim_2}, we get
\begin{equation}
\label{eq_3_10}
\begin{aligned}
\|(R(&\tau^2- i(2^\nu+1)\tau ) -G_\tau  ) \tilde f_\nu\|_{L^{\frac{2n}{n-2}}(\mathbb{T}^n)}\\
&\lesssim \sum_{m=0}^\infty (1+m)^{1/2} \|\chi_m(R(\tau^2- i(2^\nu+1)\tau ) -G_\tau  ) \tilde f_\nu\|_{L^2(\mathbb{T}^n)}\\
&\lesssim \bigg( \sum_{m=0}^\infty (1+m) \sup_{m\le \sqrt{\big(j+\frac{1}{2}\big)^2+\lambda_k}<m+1} |a_{j,k,\nu}(\tau)|\bigg) \|\tilde f_\nu\|_{L^{\frac{2n}{n+2}}(\mathbb{T}^n)}.
\end{aligned}
\end{equation}
To see that the above series converges and is bounded uniformly with respect to $\tau$ with $|\tau|\ge 1$ and $\nu$, we first observe using \eqref{eq_3_5_0} and \eqref{eq_3_6_0}  that 
\begin{align*}
& \sup_{m\le \sqrt{\big(j+\frac{1}{2}\big)^2+\lambda_k}<m+1} |a_{j,k,\nu}(\tau)|\\
& \lesssim \sup_{m\le \sqrt{\big(j+\frac{1}{2}\big)^2+\lambda_k}<m+1}
 \frac{2^\nu|\tau|}{\big( \big| \big(j+\frac{1}{2}\big)^2+\lambda_k -\tau^2\big| +2^{\nu-1}|\tau|\big)^2}
 \lesssim  \frac{2^\nu|\tau|}{ (m^2 -\tau^2)^2 +4^{\nu-1}\tau^2}.
\end{align*}
Hence, we have
\begin{equation}
\label{eq_3_11}
\begin{aligned}
 \sum_{m=0}^\infty (1+m) \sup_{m\le \sqrt{\big(j+\frac{1}{2}\big)^2+\lambda_k}<m+1} |a_{j,k,\nu}(\tau)|\lesssim 2^\nu|\tau|  \sum_{m=0}^\infty 
 \frac{1+m}{ (m^2 -\tau^2)^2 +4^{\nu-1}\tau^2}\\
 \lesssim \frac{1}{|\tau|} + 2^\nu|\tau|  \sum_{m=1}^\infty 
 \frac{m}{ (m^2 -\tau^2)^2 +4^{\nu-1}\tau^2}
 \lesssim 1+\int_{0}^\infty \frac{ 2^\nu|\tau|  t}{(t^2 -\tau^2)^2 +4^{\nu-1}\tau^2}dt\\
 \lesssim \int_{-\infty}^\infty \frac{ds}{s^2+1}<\infty.
\end{aligned}
\end{equation}
Here we have performed the change of variables $s=(t^2/|\tau|-|\tau|)/2^{\nu-1}$.
Thus, it follows from \eqref{eq_3_10} and \eqref{eq_3_11} that 
\[
\|(R(\tau^2- i(2^\nu+1)\tau ) -G_\tau  ) \tilde f_\nu\|_{L^{\frac{2n}{n-2}}(\mathbb{T}^n)}\lesssim \|\tilde f_\nu\|_{L^{\frac{2n}{n+2}}(\mathbb{T}^n)},
\]
with a constant uniform in $\tau$ and $\nu$. The estimate \eqref{eq_3_9} is therefore a consequence of the uniform resolvent estimate  \eqref{eq_3_9_1},  in view of  \eqref{eq_im_zeta}.
The proof of Proposition \ref{prop_carleman_L_n_2} is complete. 
\end{proof}

It is now easy to finish the proof of Theorem \ref{thm_main}.  Let  $q\in L^{\frac{n}{2}}(\mathbb{T}^n)$ and let us check that  there exists a constant $C>0$ such that for $\tau\in \R$ with $|\tau|$ sufficiently large, the following estimate holds, 
\begin{equation}
\label{eq_L_p_L_q_potential}
 \|u\|_{L^{\frac{2n}{n-2}}(\mathbb{T}^n)}\le  C \|( H_0(\theta_\tau)+q)u  \|_{L^{\frac{2n}{n+2}}(\mathbb{T}^n)},
\end{equation}
when $u\in \mathcal{D}(H(\theta_\tau))$. 
Let $u\in \mathcal{D}(H(\theta_\tau))$, then by  Lemma \ref{lem_domain}, we know that $u\in W^{2,\frac{2n}{n+2}}(\mathbb{T}^n)$. 
Denoting  by $C_0$ the uniform constant in the estimate \eqref{eq_L_p_L_p}, we write $q=q^\sharp+(q-q^\sharp)$ where $q^\sharp\in L^\infty(\mathbb{T}^n)$ and
\begin{equation}
\label{eq_3_1}
\|q-q^\sharp\|_{L^{\frac{n}{2}}(\mathbb{T}^n)}\le \frac{1}{4C_0}.
\end{equation}
By the embedding $L^2(\mathbb{T}^n)\hookrightarrow L^{\frac{2n}{n+2}}(\mathbb{T}^n)$ and the estimate \eqref{eq_L_p_L_2}, for  $\tau\in \R$ with $|\tau|\ge 1$ sufficiently large, we have
\begin{equation}
\label{eq_3_2}
\|q^\sharp u\|_{L^{\frac{2n}{n+2}}(\mathbb{T}^n)}\le C\|q^\sharp\|_{L^\infty(\mathbb{T}^n)} \|u\|_{L^2(\mathbb{T}^n)}
\le \frac{1}{2}
\| H_0(\theta_\tau) u  \|_{L^{\frac{2n}{n+2}}(\mathbb{T}^n)}.
\end{equation}
Using H\"older's inequality and estimates  \eqref{eq_L_p_L_p}, \eqref{eq_3_1}, and \eqref{eq_3_2}, for $\tau\in \R$ with $|\tau|\ge 1$ sufficiently large,  we have
\begin{align*}
\| (&H_0(\theta_\tau)+ q  ) u  \|_{L^{\frac{2n}{n+2}}(\mathbb{T}^n)}\ge 
\|  H_0(\theta_\tau) u  \|_{L^{\frac{2n}{n+2}}(\mathbb{T}^n)}
-\|q^\sharp u\|_{L^{\frac{2n}{n+2}}(\mathbb{T}^n)} \\
&- \|(q-q^\sharp) u\|_{L^{\frac{2n}{n+2}}(\mathbb{T}^n)} 
\ge \frac{1}{2}
\| H_0 (\theta_\tau)  u  \|_{L^{\frac{2n}{n+2}}(\mathbb{T}^n)} -\|q-q^\sharp\|_{L^{\frac{n}{2}}(\mathbb{T}^n)}\|u\|_{L^{\frac{2n}{n-2}}(\mathbb{T}^n)}\\
&\ge  \frac{1}{4C_0} \|u\|_{L^{\frac{2n}{n-2}}(\mathbb{T}^n)},
\end{align*}
which proves the estimate \eqref{eq_L_p_L_q_potential}.  This completes the proof of Theorem \ref{thm_main}.

\begin{appendix}

\section{Definition of operators using  sesquilinear forms, the Floquet theory and the Thomas approach}

\label{sec_1}

The material of this appendix is standard and is presented here for completeness  and convenience of the reader, see  \cite{Birman_Suslina_1999},  \cite{Reed_Simon_book_4},  \cite{Shen_2001}, \cite{Sjostrand_periodic},  \cite{Kuchment_book_1993}, \cite{Kuchment_Levendorskii_2002}, and \cite{Vesalainen_lic}.

\subsection{Definition of the Schr\"odinger operator, acting on $L^2(\R^n)$}

\label{sub_sec_1}
Let us start by reviewing the definition of the Schr\"odinger operator $H=-\Delta_g+q$ on $\R^n$, $n\ge 3$, with a potential  $q\in L^{n/2}_{\text{loc}}(\R^n; \C)$ and with a smooth Riemannian metric $g$, as a closed densely defined sectorial operator on $L^2(\R^n)$.  We assume that $g$ and $q$ are periodic with respect to the lattice $2\pi\Z^n$.

In what follows all norms are defined using the Riemannian volume element $|g|^{1/2}dx$. 

Consider  the sesquilinear form,    
\[
h[u,v]=\int_{\R^n} g^{jk}D_{x_k} u \overline{D_{x_j} v}|g|^{1/2} dx+\int_{\R^n} q u\overline{v}|g|^{1/2}dx,
\]
for $u,v\in C^\infty_0(\R^n)$. 

Let $(0,2\pi)^n$ be  the interior of the fundamental domain of the lattice $2\pi\Z^n$.  By H\"older's inequality and the Sobolev embedding $H^1((0,2\pi)^n)\subset L^{\frac{2n}{n-2}}((0,2\pi)^n)$, we get
\begin{align*}
\bigg|\int_{(0,2\pi)^n} q u\overline{v}|g|^{1/2}dx\bigg|&\le \| q\|_{L^{\frac{n}{2}}((0,2\pi)^n)}\|u\|_{L^{\frac{2n}{n-2}}((0,2\pi)^n)} \|v\|_{L^{\frac{2n}{n-2}}((0,2\pi)^n)}\\
&\le C \|u\|_{H^1((0,2\pi)^n)} \|v\|_{H^1((0,2\pi)^n)}.
\end{align*}
Replacing $(0,2\pi)^n$ by its translates $E_k=(0,2\pi)^n+k$, $k\in 2\pi\Z^n$, summing over all $k$, and using the Cauchy -- Schwarz inequality, we get 
\[
\bigg| \int_{\R^n} q u\overline{v}|g|^{1/2}dx \bigg| \le C \sum_{k\in 2\pi\Z^n}  \|u\|_{H^1(E_k)} \|v\|_{H^1(E_k)}\le C \|u\|_{H^1(\R^n)} \|v\|_{H^1(\R^n)}.
\]
 It follows that 
\begin{equation}
\label{eq_2_bounded_h}
|h[u,v]|\le C \|u\|_{H^1(\R^n)} \|v\|_{H^1(\R^n)},
\end{equation}
for $u,v\in C^\infty_0(\R^n)$. Hence, the form $h$ extends to a bounded sesquilinear form on $H^1(\R^n)$. 

We shall next check the coercivity of the form $h$ on $H^1(\R^n)$, i.e., 
\begin{equation}
\label{eq_2_coercivity_h}
\text{Re}\, h[u,u]\ge c_0\|u\|_{H^1(\R^n)}^2 -C_1 \|u\|_{L^2(\R^n)}^2,\quad  c_0>0,\quad C_1\in \R.
\end{equation}
Writing 
\begin{equation}
\label{eq_sec2_dec}
q= q^\sharp +(q-q^\sharp),
\end{equation} 
where $ q^\sharp\in L^\infty((0,2\pi)^n)$ and $\|q-q^\sharp\|_{ L^{\frac{n}{2}}((0,2\pi)^n)}\le \varepsilon$ for some $\varepsilon>0$ small, we have 
\begin{align*}
\int_{(0,2\pi)^n} |q||u|^2 |g|^{1/2}dx&\le \|q^\sharp \|_{L^\infty ((0,2\pi)^n)}\|u\|^2_{L^2((0,2\pi)^n)} \\
&+ \|q-q^\sharp\|_{L^{\frac{n}{2}}((0,2\pi)^n)} \|u\|^2_{L^{\frac{2n}{n-2}}((0,2\pi)^n)}\\
&\le \mathcal{O}_\varepsilon(1)  \|u\|^2_{L^2((0,2\pi)^n)}  + \mathcal{O}(\varepsilon)  \| u\|^2_{H^1((0,2\pi)^n)}.
\end{align*}
It follows  that
\begin{equation}
\label{eq_2_1}
\int_{\R^n} |q||u|^2 |g|^{1/2}dx \le \mathcal{O}_\varepsilon(1)  \|u\|^2_{L^2(\R^n)}  + \mathcal{O}(\varepsilon)  \| u\|^2_{H^1(\R^n)}.
\end{equation}
As $g$ positive definite, using \eqref{eq_2_1} and choosing $\varepsilon$ sufficiently small, we get \eqref{eq_2_coercivity_h}.

When equipped with the domain $\mathcal{D}(h)=H^1(\R^n)$, the form $h$  is densely defined, closed and  sectorial with the bound
\[
|\text{Im}\, h[u,u]| \le |h[u,u]| \le C c_0^{-1} (\text{Re}\, h[u,u]+C_1 \|u\|^2_{L^2(\R^n)}).
\]
Here we have used  \eqref{eq_2_bounded_h} and \eqref{eq_2_coercivity_h}.
 By \cite[Corollary 12.19]{Grubb_book}, there exists a closed densely defined sectorial operator $H$ on $L^2(\R^n)$, which we denote by $H=-\Delta_g+q$, with domain
\[
\mathcal{D}(H)=\{ u\in H^1(\R^n): (-\Delta_g+q) u\in L^2(\R^n) \},
\]
such that 
\[
h[u,v]=(Hu,v)_{L^2(\R^n)}, \quad u\in \mathcal{D}(H), \quad v\in \mathcal{D}(h).
\]

\subsection{Definition of the family of operators, acting on $L^2(\mathbb{T}^n)$}

\label{subsec_A2}
Let $\theta\in \C^n$. We shall next define a family of operators $H(\theta)$, acting  on $L^2(\mathbb{T}^n)$, where  $\mathbb{T}^n=\R^n/2\pi\Z^n$, formally given by
\begin{equation}
\label{eq_2_2_0}
H(\theta)=e^{-ix\cdot \theta} H e^{ix\cdot \theta}=|g|^{-1/2}(D_{x_j}+\theta_j)(|g|^{1/2} g^{jk}(D_{x_k}+\theta_k))+q.
\end{equation}
Let $u,v\in C^\infty(\mathbb{T}^n)$ and consider a family of sesquilinear forms,
\begin{equation}
\label{eq_2_2}
h(\theta)[u,v]=\int_{\mathbb{T}^n} g^{jk} D_{x_k}u\overline{D_{x_j}v} |g|^{1/2}dx+w[\theta][u,v],
\end{equation}
where 
\[
w[\theta][u,v]=\int_{\mathbb{T}^n} g^{jk} ( \theta_k u\overline{D_{x_j}v} +(D_{x_k}u)\theta_j \overline{v} +\theta_k\theta_j u\overline{v}) |g|^{1/2}dx
+\int_{\mathbb{T}^n}  qu\overline{v} |g|^{1/2}dx.
\]
By H\"older's inequality and the Sobolev embedding $H^1(\mathbb{T}^n)\subset L^{\frac{2n}{n-2}}(\mathbb{T}^n)$,
we see that  
\begin{equation}
\label{eq_2_bounded_h_theta}
|h(\theta)[u,v]|\le C \|u\|_{H^1(\mathbb{T}^n)}\|v\|_{H^1(\mathbb{T}^n)},
\end{equation}
with $C=C(\theta)>0$, 
and hence, for each $\theta\in \C^n$, the form $h(\theta)$ extends to a bounded sesquilinear form on $H^1(\mathbb{T}^n)$.

Using the Peter--Paul inequality,  the decomposition \eqref{eq_sec2_dec}, and the Sobolev embedding,  we get
\begin{align*}
|w(\theta)[u,u]|\le  \mathcal{O}(\varepsilon) \| u\|_{H^1(\mathbb{T}^n)}^2+ \mathcal{O}_{\theta,\varepsilon}(1) \|u\|^2_{L^2(\mathbb{T}^n)},
\end{align*}
for $\varepsilon>0$. Hence, using the fact that $g$ is positive definite, and choosing $\varepsilon>0$ sufficiently small in the previous estimate, we obtain that 
\begin{equation}
\label{eq_2_coercivity_h_theta}
\text{Re}\, h[\theta][u,u]\ge c_0\|u\|_{H^1(\mathbb{T}^n)}^2 -C_1 \|u\|_{L^2(\mathbb{T}^n)}^2,\quad  c_0>0,\quad C_1=C_1(\theta)\in \R.
\end{equation}

When equipped with the domain $\mathcal{D}(h(\theta))=H^1(\mathbb{T}^n)$, the form $h(\theta)$  is densely defined, closed and  sectorial with the bound
\begin{equation}
\label{eq_2_sectorial_h_theta}
|\text{Im}\, h(\theta)[u,u]| \le |h(\theta)[u,u]| \le C c_0^{-1} (\text{Re}\, h(\theta)[u,u]+C_1 \|u\|^2_{L^2(\mathbb{T}^n)}).
\end{equation}
Here we have used  \eqref{eq_2_bounded_h_theta} and \eqref{eq_2_coercivity_h_theta}, and we may also notice that the  constants $C$ and $C_1$ are uniform in $\theta$, on compact subsets of $\mathbb{C}^n$. 

By \cite[Corollary 12.19]{Grubb_book}, there exists a closed densely defined sectorial operator $H(\theta)$ on $L^2(\mathbb{T}^n)$, which we write as in \eqref{eq_2_2_0}, with domain
\begin{equation}
\label{eq_2_2_0_domain_new}
\mathcal{D}(H(\theta))=\{ u\in H^1(\mathbb{T}^n): H(\theta) u\in L^2(\mathbb{T}^n) \},
\end{equation}
such that 
\[
h(\theta)[u,v]=(H(\theta)u,v)_{L^2(\mathbb{T}^n)}, \quad u\in \mathcal{D}(H(\theta)), \quad v\in \mathcal{D}(h(\theta)).
\]
In view of \eqref{eq_2_2_0_domain_new} and \eqref{eq_2_2_0}, we have 
\begin{equation}
\label{eq_2_domain_H_theta_leading_term}
\mathcal{D}(H(\theta))=\{ u\in H^1(\mathbb{T}^n): (g^{jk}D_{x_j}D_{x_k}+q) u\in L^2(\mathbb{T}^n) \},
\end{equation}
and in particular we see that $\mathcal{D}(H(\theta))$ is independent of $\theta\in \C^n$.

Furthermore,  by \cite[Corollary 12.21]{Grubb_book} the spectrum of $H(\theta)$ is contained in the following angular set with opening $<\pi$:
\[
\{\lambda\in \C: \text{\Re}\,\lambda\ge -C_1(\theta)+c_0, |\text{Im}\, \lambda| \le C(\theta)c_0^{-1} (\text{Re}\,\lambda+ C_1(\theta))  \},
\] 
where the constants are taken from  \eqref{eq_2_coercivity_h_theta} and \eqref{eq_2_sectorial_h_theta}.

It follows that for each $\theta\in \C^n$,  the resolvent of $H(\theta)$ is compact on $L^2(\mathbb{T}^n)$, and hence, the spectrum of $H(\theta)$ is discrete, each eigenvalue having a finite algebraic multiplicity.    

We conclude that the family of operators $H(\theta)$ is an entire holomorphic family of type (A) with respect to each of the complex variables $\theta_1, \dots, \theta_n$, see \cite[Section VII.2]{Kato_book}.

\subsection{The Floquet decomposition} The idea here is to pass from the operator $H$, acting on functions on $\R^n$, to the family of operators $H(\theta)$, acting on functions on the torus $\mathbb{T}^n$.

To that end, for $u\in \mathcal{S}(\R^n)$,  we define the Floquet--Bloch--Gelfand transform by
\[
(Uu)(\theta,x)=e^{-ix\cdot\theta}\sum_{k\in \Z^n} e^{-2\pi k i \cdot\theta} u(x+2\pi k), \quad \theta\in \R^n,\quad x\in [0,2\pi]^n,
\]
where the series converges in the $C^\infty$--sense.  We have 
\[
U:\mathcal{S}(\R^n)\to C_{\text{Fl}}^\infty(\R^n_{\theta}\times \mathbb{T}^n_x),
\]
where 
\[
C_{\text{Fl}}^\infty(\R^n_{\theta}\times \mathbb{T}^n_x)=\{ f\in C^\infty(\R^n_{\theta}\times \mathbb{T}^n_x): 
f(\theta+l,x)=e^{-ix\cdot l} f(\theta,x),\  l\in \Z^n
\}.
\]

It follows that 
\[
\int_{(0,2\pi)^n}\int_{(0,1)^n} |(Uu)(\theta,x)|^2d\theta |g|^{1/2}dx= \sum_{k\in\Z^n} \int_{(0,2\pi)^n} |u(x+2\pi k)|^2|g|^{1/2}dx,
\]
and therefore,
\[
\|Uu\|_{L^2((0,1)^n_{\theta}\times (0,2\pi)^n_x)}=\|u\|_{L^2(\R^n)}.
\]
Hence, $U$ can be extended to an isometry,
\[
U: L^2(\R^n)\to L^2((0,1)^n_{\theta}\times \mathbb{T}^n_x).
\]
Thus, $U^*U=I$, where $U^*$ is the $L^2$--adjoint of $U$.  A direct computation shows that for $v\in C_{\text{Fl}}^\infty(\R^n_{\theta}\times \mathbb{T}^n_x)$, we have 
\begin{equation}
\label{eq_2_adjoint_U}
(U^*v)(x)=\int_{(0,1)^n} e^{ix\cdot\theta} v(\theta,x)d\theta\in  \mathcal{S}(\R^n).
\end{equation}

To see that $UU^*=I$, we first let $v\in C_{\text{Fl}}^\infty(\R^n_{\theta}\times \mathbb{T}^n_x)$. 
Then we have the Fourier series expansion with respect to $\theta$,
\[
v(\theta,x)=\sum_{k\in \Z^n} \bigg( \int_{(0,1)^n} v(\theta',x) e^{-i(2\pi k-x)\cdot\theta'} d\theta'\bigg)e^{i(2\pi k-x)\cdot\theta}.
\]
We get 
\[
U(U^*v)(\theta, x)=\sum_{k\in\Z^n} e^{-i(x+2\pi k)\cdot\theta} \int_{(0,1)^n} e^{i(x+2\pi k)\cdot\theta'} v(\theta', x)d\theta'=v(\theta, x),
\]
and therefore, by density, we have  $UU^*=I$ on $L^2((0,1)^n_{\theta}\times \mathbb{T}^n_x)$. 

Hence, the map 
\begin{equation}
\label{eq_2_unitary}
U: L^2(\R^n)\to L^2((0,1)^n_{\theta}\times \mathbb{T}^n_x)=L^2((0,1)^n_{\theta}; L^2(\mathbb{T}^n))=:\int^{\oplus}_{(0,1)^n} L^2(\mathbb{T}^n)d\theta,
\end{equation}
 is unitary. 
 
We shall next show that  
\begin{equation}
\label{eq_2_map_U_form_domain}
U: H^1(\R^n)\to L^2((0,1)^n; H^1(\mathbb{T}^n))
\end{equation}
is a linear homeomorphism. To that end, let  $u\in H^1(\R^n)$, and let us check that $Uu\in L^2((0,1)^n; H^1(\mathbb{T}^n))$.  Using the fact that for $u\in \mathcal{S}(\R^n)$, 
\[
D_{x_k}(Uu)(\theta,x)=(U(D_{x_k} u))(\theta,x)-\theta_k (Uu)(\theta,x),\quad k=1,\dots, n,
\]
we get 
\begin{equation}
\label{eq_2_der_Uu}
\|D_{x_k} (Uu)\|_{L^2((0,1)^n; L^2(\mathbb{T}^n))}\le \| D_{x_k} u\|_{L^2(\R^n)}+\|u\|_{L^2(\R^n)},\quad u\in \mathcal{S}(\R^n).
\end{equation}
By a standard approximation argument,  we get $Uu\in L^2((0,1)^n; H^1(\mathbb{T}^n))$, and the estimate \eqref{eq_2_der_Uu} extends to $u\in H^1(\R^n)$. Hence, the map \eqref{eq_2_map_U_form_domain} is continuous. 

It remains to show that the map \eqref{eq_2_map_U_form_domain} is surjective. Let $v\in L^2((0,1)^n; H^1(\mathbb{T}^n))$, and thus, $U^{-1}v\in L^2(\R^n)$. As a consequence of 
\eqref{eq_2_adjoint_U}, we get 
\[
\| D_{x_k}(U^{-1}v)\|_{L^2(\R^n)}\le \|D_{x_k} v\|_{L^2((0,1)^n; L^2(\mathbb{T}^n))} + \|v\|_{L^2((0,1)^n; L^2(\mathbb{T}^n))},
\]
for $v\in  C_{\text{Fl}}^\infty(\R^n_{\theta}\times \mathbb{T}^n_x)$. An approximation argument yields that $U^{-1}v\in H^1(\R^n)$, which shows the claim.

Now a direct computation shows that  
\begin{equation}
\label{eq_2_5}
h[u,v]=\int_{(0,1)^n} h(\theta)[(Uu)(\theta, \cdot), (Uv)(\theta, \cdot)]d\theta,
\end{equation}
for $u,v\in \mathcal{S}(\R^n)$. 
In view of the fact that the map \eqref{eq_2_map_U_form_domain} is a linear homeomorphism, and the forms $h$ and $h(\theta)$ are continuous on $H^1(\R^n)$ and $H^1(\mathbb{T}^n)$, respectively,   the decomposition \eqref{eq_2_5} extends to $u,v\in H^1(\R^n)$.

Let us show that we have  the following decomposition of the operator $UHU^{-1}$ in a direct integral, 
\begin{equation}
\label{eq_sec_2_direct_integral}
U H U^{-1}=\int_{(0,1)^n} ^{\oplus} H(\theta)d\theta.
\end{equation}
To that end let us recall the definition of the direct integral, 
\begin{align*}
\mathcal{D}\bigg( \int_{(0,1)^n} ^{\oplus} H(\theta)d\theta\bigg)=\bigg\{ \phi \in  \int^{\oplus}_{(0,1)^n} L^2(\mathbb{T}^n)d\theta: \phi(\theta,\cdot) \in \mathcal{D} (H(\theta))\text{ for a.a.}\\
\theta\in (0,1)^n,
\int_{(0,1)^n} \| H(\theta)\phi(\theta, \cdot)\|^2_{L^2(\mathbb{T}^n)}d\theta<\infty \bigg\},
\end{align*}
and 
\[
\bigg(\bigg(\int_{(0,1)^n} ^{\oplus} H(\theta)d\theta\bigg)\phi\bigg)(\theta,x)=(H(\theta) \phi(\theta,\cdot))(x),
\]
 for a.a. $\theta\in (0,1)^n$. Let us first prove that 
 \[
 \mathcal{D}(UH U^{-1})=\mathcal{D}\bigg( \int_{(0,1)^n} ^{\oplus} H(\theta)d\theta\bigg).
 \]
Using  \cite[Theorem 12.18]{Grubb_book}, \eqref{eq_2_unitary}, \eqref{eq_2_map_U_form_domain} and \eqref{eq_2_5}, we get
\begin{align*} 
\mathcal{D}(UHU^{-1})&=\{\phi\in L^2((0,1)^n; L^2(\mathbb{T}^n)): U^{-1}\phi\in \mathcal{D}(H)\}\\
&=\{\phi\in  L^2((0,1)^n; H^1(\mathbb{T}^n)):
 \exists f\in L^2(\R^n)\text{ s.t.}\\
&  h[U^{-1}\phi, \varphi]=(f,\varphi)_{L^2(\R^n)}, \forall \varphi\in H^1(\R^n)\}\\
 & =\{\phi\in  L^2((0,1)^n; H^1(\mathbb{T}^n)):
 \exists f\in L^2((0,1)^n; L^2(\mathbb{T}^n))\text{ s.t.}\\
&  \int_{(0,1)^n} h(\theta)[\phi, \psi] d\theta=(f,\psi)_{L^2((0,1)^n; L^2(\mathbb{T}^n))}, \forall \psi\in L^2((0,1)^n; H^1(\mathbb{T}^n))\}.
\end{align*}
Using \eqref{eq_2_2}, integrating by parts and modifying the function $f$ by a suitable expression depending only on the first order partial derivatives of the metric $g$ and the function $\phi$ in the variables $x\in \mathbb{T}^n$, we obtain that 
\begin{align*}
&\mathcal{D}(UHU^{-1})=\{\phi\in  L^2((0,1)^n; H^1(\mathbb{T}^n)):
 \exists f\in L^2((0,1)^n; L^2(\mathbb{T}^n))\text{ s.t.} \\
 &\int_{(0,1)^n}\int_{\mathbb{T}^n} ( D_{x_k}\phi \overline{D_{x_j}(g^{jk}|g|^{1/2}\psi)} +q \phi\overline{\psi}|g|^{1/2} )dx d\theta=\int_{(0,1)^n}\int_{\mathbb{T}^n} f\overline{\psi}|g|^{1/2}dxd\theta,\\
& \forall \psi\in L^2((0,1)^n; H^1(\mathbb{T}^n))
 \}\\
 &=\{ \phi\in  L^2((0,1)^n; H^1(\mathbb{T}^n)):( g^{jk}D_{x_j}D_{x_k}+q ) \phi \in L^2((0,1)^n; L^2(\mathbb{T}^n))\}\\
 &=\mathcal{D}\bigg( \int_{(0,1)^n} ^{\oplus} H(\theta)d\theta\bigg).
\end{align*}
Here we have also used \eqref{eq_2_domain_H_theta_leading_term}. 

Let $\phi\in \mathcal{D}(UHU^{-1})$. For any $\psi\in C^\infty_0((0,1)^n; C^\infty(\mathbb{T}^n))$, we get
\begin{align*}
(UHU^{-1} \phi, \psi)_{L^2((0,1)^n; L^2(\mathbb{T}^n))}&=h[U^{-1} \phi, U^{-1}\psi]=\int_{(0,1)^n} (H(\theta)\phi, \psi)_{L^2(\mathbb{T}^n)}d\theta\\
&=\bigg (\bigg(\int_{(0,1)^n} ^{\oplus} H(\theta)d\theta\bigg) \phi, \psi\bigg)_{L^2((0,1)^n; L^2(\mathbb{T}^n))},
\end{align*}
which shows  \eqref{eq_sec_2_direct_integral}. 

\subsection{Thomas's  approach}

To show that the operator $H$ has no eigenvalues, a fundamental idea of Thomas \cite{Thomas_1973} is to complexify the quasimomentum $\theta$ and use analytic perturbation theory.  As a consequence of this idea the following  result takes place. We have learned this result from \cite{Kuchment_book_1993}, \cite{Kuchment_Levendorskii_2002}, and for the proof we follow \cite{Vesalainen_lic}.

\begin{prop}
\label{prop_Floquet_Thomas}
Let $\lambda\in \C$.  Then if there exists $\theta=\theta(\lambda)\in \C^n$ such that the operator $H(\theta)-\lambda$, acting on $L^2(\mathbb{T}^n)$, has zero kernel,  then $\lambda$ is not an eigenvalue of the operator $H$, acting on $L^2(\R^n)$. 
\end{prop}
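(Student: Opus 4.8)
The plan is to argue by contradiction: feed an eigenfunction of $H$ through the Floquet decomposition \eqref{eq_sec_2_direct_integral}, and then use the analyticity of the family $\theta\mapsto H(\theta)$ to see that the resulting ``bad set'' of quasimomenta is too large to be compatible with the hypothesis. So first I would suppose that $\lambda$ is an eigenvalue of $H$, with $Hu=\lambda u$ for some nonzero $u\in\mathcal{D}(H)\subset L^2(\R^n)$. Applying the unitary transform $U$ from \eqref{eq_2_unitary} together with the direct integral decomposition \eqref{eq_sec_2_direct_integral} gives $(UHU^{-1})(Uu)=\lambda\,Uu$, which by the definition of the direct integral means $(Uu)(\theta,\cdot)\in\mathcal{D}(H(\theta))$ and $H(\theta)(Uu)(\theta,\cdot)=\lambda (Uu)(\theta,\cdot)$ for a.e.\ $\theta\in(0,1)^n$. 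Since $U$ is unitary and $u\neq0$, the set of $\theta\in(0,1)^n$ with $(Uu)(\theta,\cdot)\neq0$ has positive Lebesgue measure, and at each such $\theta$ the function $(Uu)(\theta,\cdot)$ is a nonzero element of $\text{Ker}(H(\theta)-\lambda)$. Hence, putting
\[
Z:=\{\theta\in\C^n:\text{Ker}(H(\theta)-\lambda)\neq\{0\}\},
\]
the set $Z\cap(0,1)^n$ has positive measure. Because $H(\theta)$ has compact resolvent and hence discrete spectrum, $\text{Ker}(H(\theta)-\lambda)\neq\{0\}$ is equivalent to $\lambda\in\text{Spec}(H(\theta))$, i.e.\ to $H(\theta)-\lambda$ not being boundedly invertible; and by hypothesis the given $\theta(\lambda)$ lies outside $Z$.

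The heart of the argument is then to show $Z$ is a proper complex-analytic subvariety of $\C^n$. On any bounded open $\Omega\subset\C^n$ the sectorial bounds for $H(\theta)$ are uniform (as noted after \eqref{eq_2_sectorial_h_theta}), so some fixed $\zeta_0\in\C$ avoids $\text{Spec}(H(\theta))$ for all $\theta\in\overline\Omega$, and then $T(\theta):=(H(\theta)-\zeta_0)^{-1}$ is a holomorphic family of compact operators on $\Omega$ (Kato's theory of holomorphic families of type (A), \cite[Section VII.2]{Kato_book}). Writing $H(\theta)-\lambda=(H(\theta)-\zeta_0)\bigl(I-(\lambda-\zeta_0)T(\theta)\bigr)$, one sees $\theta\in Z$ iff $I-(\lambda-\zeta_0)T(\theta)$ fails to be invertible, a holomorphic family of ``identity plus compact'' operators. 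The analytic Fredholm theorem gives the dichotomy: either this operator is non-invertible at every $\theta\in\Omega$, or $Z\cap\Omega$ is a proper analytic subvariety of $\Omega$ (locally the zero set of a holomorphic function not identically zero, e.g.\ a regularized Fredholm determinant, using that $T(\theta)$ lies in a Schatten class). Taking $\Omega$ to contain $\theta(\lambda)$ excludes the first alternative on such $\Omega$, and propagating across overlapping balls by connectedness of $\C^n$ — a proper analytic subvariety is nowhere dense, so it cannot fill any overlap — shows $Z$ is a proper analytic subvariety of all of $\C^n$.

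To finish, I would note that such a subvariety is, locally, contained in $\{d=0\}$ for some holomorphic $d\not\equiv0$; its restriction $d|_{\R^n}$ is real-analytic and cannot vanish identically, for otherwise all derivatives of $d$ would vanish along the totally real subspace $\R^n$, forcing $d\equiv0$. Hence $\{d|_{\R^n}=0\}$ is Lebesgue-null, and covering $\R^n$ by countably many such charts yields that $Z\cap\R^n$ has measure zero, contradicting the positivity of the measure of $Z\cap(0,1)^n$ found above; therefore $\lambda$ is not an eigenvalue of $H$. I expect the main obstacle to be precisely this last mechanism — globalizing the analytic-subvariety structure of $Z$ and then showing an analytic subvariety meets $\R^n\subset\C^n$ in a null set — since everything preceding it is a routine unwinding of the Floquet apparatus recorded in the appendix. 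For the details one may follow \cite{Vesalainen_lic}, \cite{Kuchment_book_1993}, and \cite{Kuchment_Levendorskii_2002}.
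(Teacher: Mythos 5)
Your argument is correct in outline, and its first half (contradiction, Floquet--Bloch--Gelfand transform, direct integral, positive measure of the set of $\theta\in(0,1)^n$ where $\lambda$ is an eigenvalue of $H(\theta)$) coincides with the paper's. Where you genuinely diverge is in how the positive-measure set is played off against analyticity. The paper never globalizes in several complex variables: it freezes all but one coordinate, applies the \emph{one-variable} analytic Fredholm alternative of \cite[Theorem VII.1.10]{Kato_book} to the type (A) family $\theta_j\mapsto\mathcal{H}(\theta_j)$, and uses Fubini to run an induction over the coordinates, upgrading ``eigenvalue on a positive-measure slice'' to ``eigenvalue for all $\theta_j\in\C$'' one variable at a time until it reaches the prescribed $\tilde\theta$ --- contradicting the hypothesis directly. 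This buys elementarity: it needs only separate holomorphy in each $\theta_j$ (which is exactly what the appendix establishes), no determinants, no Schatten ideals, and no facts about analytic varieties. Your route instead exhibits the singular set $Z$ as a proper analytic subvariety of $\C^n$ and invokes that such a set meets the totally real $\R^n$ in a Lebesgue-null set; this is a stronger structural statement (essentially that the Fermi/Bloch variety is analytic) but requires machinery you only gesture at: joint holomorphy of $\theta\mapsto(H(\theta)-\zeta_0)^{-1}$ (the paper records only type (A) in each variable separately, so you need Hartogs-type arguments or a direct joint estimate), holomorphy in a Schatten norm so that the regularized determinant is well defined and holomorphic (or, more cheaply, the local finite-rank reduction underlying the proof of the analytic Fredholm theorem, which avoids determinants altogether), and the null-set property of zero sets of nontrivial real-analytic functions. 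These gaps are all fillable by standard arguments, so your proof works, but the paper's coordinate-by-coordinate induction gets the same contradiction with far less overhead.
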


\begin{proof}
Seeking a contradiction, assume that $\lambda$ is  an eigenvalue of the operator $H$, acting on $L^2(\R^n)$, i.e. there exists $u\in \mathcal{D}(H)$, $\|u\|_{L^2(\R^n)}=1$, such that 
\begin{equation}
\label{eq_app_thomas_eigen}
(H-\lambda)u=0. 
\end{equation}

We shall show that $\lambda$ is  an eigenvalue of the operator $H(\theta)$, acting on $L^2(\mathbb{T}^n)$, for all $\theta\in \C^n$. Since the Floquet--Bloch--Gelfand transform $U$ is an isometry, we have 
\[
\|u\|^2_{L^2(\R^n)}=\int_{\theta\in (0,1)^n} \|(Uu)(\theta, \cdot)\|^2_{L^2(\mathbb{T}^n)} d\theta=1, 
\]
and therefore,
\begin{equation}
\label{eq_sec_2_4_1}
\mu_n(\{\theta\in (0,1)^n: \|(Uu)(\theta, \cdot)\|_{L^2(\mathbb{T}^n)}\ne 0\})>0.
\end{equation}
Here and in what follows $\mu_n$ is the Lebesgue measure on $\R^n$. 

It follows from \eqref{eq_app_thomas_eigen} that 
\[
(UHU^{-1} -\lambda) Uu=0.
\]
This together with the decomposition \eqref{eq_sec_2_direct_integral} implies that 
\[
\bigg( \int_{(0,1)^n}^\oplus H(\theta)d\theta -\lambda \bigg) Uu=0,
\]
and hence, 
\begin{equation}
\label{eq_sec_2_4_2}
(H(\theta) -\lambda) (Uu)(\theta,\cdot)=0
\end{equation}
for almost every $\theta\in (0,1)^n$. 

It follows from \eqref{eq_sec_2_4_1} and \eqref{eq_sec_2_4_2} that 
\[
\mu_n(\Theta)>0, \quad \Theta=\{ \theta\in (0,1)^n: \lambda \text{ is an eigenvalue of } H(\theta)\}.
\]

When $1\le j\le n$, let us consider the holomorphic family of operators 
\[
\mathcal{H}(\theta_j)= H(\theta_1^0, \dots, \theta_{j-1}^0,\theta_j,\theta_{j+1}^0,\dots,\theta_n^0), \quad \theta_j\in \C,
\]
while the complex values $\theta_k^0$, $k\ne j$, are kept fixed.  The resolvent of the operator $\mathcal{H}(\theta_j)$ is compact for each $\theta_j$, and an application of the analytic Fredholm theory, see \cite[Theorem VII. 1.10]{Kato_book}, allows us to conclude  that either $\lambda$ is an eigenvalue of the operator $\mathcal{H}(\theta_j)$ for each $\theta_j\in \C$ or the set of points $\theta_j\in \C$, for which $\lambda$ is an eigenvalue of $\mathcal{H}(\theta_j)$ is discrete.  

 Let $\tilde\theta\in \C^n$ be an arbitrary fixed vector and let us show that $\lambda$ is an eigenvalue of $H(\tilde\theta)$. 
We shall show this by induction. First, consider the set
\[
\Theta_2=\{ (\theta_2,\dots, \theta_n)\in (0,1)^{n-1}: \mu_1 (\{ \theta_1\in (0,1): (\theta_1,\theta_2,\dots,\theta_n)\in \Theta\})>0 \}.
\]
Thus, for any $(\theta_2,\dots, \theta_n)\in \Theta_2$, we have
\[
\mu_1 (\{ \theta_1\in (0,1): \lambda \text{ is an eigenvalue of } \mathcal{H}(\theta_1)\})>0.
\]
Hence, by the analytic Fredholm theory, we conclude that $\lambda$ is an eigenvalue of $H(\theta)$ for all $\theta_1\in \C$ and all $(\theta_2,\dots, \theta_n)\in \Theta_2$, and therefore, $\lambda$ is an eigenvalue of $H(\tilde \theta_1,\theta_2,\dots,\theta_n)$ for all  $(\theta_2,\dots, \theta_n)\in \Theta_2$.

As $\mu_n(\Theta)>0$, by Fubini's theorem, we have $\mu_{n-1}(\Theta_2)>0$.   Consider the set 
\[
\Theta_3=\{ (\theta_3,\dots, \theta_n)\in (0,1)^{n-2}: \mu_1 (\{ \theta_2\in (0,1): (\theta_2,\dots,\theta_n)\in \Theta_2\})>0 \}.
\]
Then for any $(\theta_3,\dots, \theta_n)\in \Theta_3$, since $\mu_1 (\{ \theta_2\in (0,1): (\theta_2,\dots,\theta_n)\in \Theta_2\})>0$, by the analytic Fredholm theorem, we get that $\lambda$ is an eigenvalue of the operator  $H(\tilde \theta_1,\theta_2,\theta_3,\dots,\theta_n)$ for all $\theta_2\in \C$ and all  $(\theta_3,\dots, \theta_n)\in \Theta_3$. In particular, $\lambda$ is an eigenvalue of   $H(\tilde \theta_1,\tilde \theta_2,\theta_3,\dots,\theta_n)$ for  all  $(\theta_3,\dots, \theta_n)\in \Theta_3$. Continuing in the same fashion after $n-2$ steps, we show that $\lambda$ is  an eigenvalue of   $H(\tilde \theta)$. This contradicts the assumption of the proposition. The proof  is complete. 
\end{proof}

\begin{rem}
\label{rem_app_selfadj}
In the case of a real valued periodic potential $q\in L^{\frac{n}{2}}_{\emph{\text{loc}}}(\R^n)$, the sesquilinear form $h$ is symmetric and bounded from below, and therefore, the Schr\"odinger operator $H=-\Delta_g+q$, acting on $L^2(\R^n)$, is self-adjoint and bounded from below.  For any $\theta\in (0,1)^n$, the  sesquilinear form $h(\theta)$ is symmetric and bounded from below, and thus, the  
operator $H(\theta)$, acting on $L^2(\mathbb{T}^n)$,  is self-adjoint  and bounded from below.  Furthermore, the resolvent $(H(\theta)+i)^{-1}$ is a real-analytic function of $\theta\in (0,1)^n$, and $(H(\theta)+i)^{-1}$ is compact for every $\theta\in (0,1)^n$. Then using a general result of  \cite{Filonov_Sobolev_2004} and \cite{Gerard_Nier_1998}, concerning the spectrum of the analytic direct integral \eqref{eq_sec_2_direct_integral}, we conclude that the singular continuous component of the spectrum of $H$ is empty, and the pure point spectrum is  at most discrete, consisting only of isolated points without finite accumulation points, and each eigenvalue $\lambda$ of $H$ is of infinite multiplicity. 
Hence, in the case of a real valued periodic potential $q$, the absence of eigenvalues implies that the spectrum of $H$ is purely absolutely continuous. 
\end{rem}
\end{appendix}

\section*{Acknowledgements}

The research of K.K. is partially supported by the
Academy of Finland (project 141075).  The research of
G.U. is partially supported by the National Science Foundation.

\end{document}